\begin{document}
\newtheorem{theorem}{Theorem}[section]
\newtheorem{definition}[theorem]{Definition}
\newtheorem{proposition}[theorem]{Proposition}
\newtheorem{lemma}[theorem]{Lemma}
\newtheorem{remark}[theorem]{Remark}
\newtheorem{corollary}[theorem]{Corollary}
\newtheorem{question}{Question}
\newtheorem{example}{Examples}[section]
\newtheorem{notation}[theorem]{Notation}
\newtheorem{claim}[theorem]{Claim}
\newtheorem{fact}[theorem]{Fact}
\newcommand\cl{\begin{claim}}
\newcommand\ecl{\end{claim}}
\newcommand\rem{\begin{remark}\upshape}
\newcommand\erem{\end{remark}}
\newcommand\ex{\begin{example}\upshape}
\newcommand\eex{\end{example}}
\newcommand\nota{\begin{notation}\upshape}
\newcommand\enota{\end{notation}}
\newcommand\dfn{\begin{definition}\upshape}
\newcommand\edfn{\end{definition}}
\newcommand\cor{\begin{corollary}}
\newcommand\ecor{\end{corollary}}
\newcommand\thm{\begin{theorem}}
\newcommand\ethm{\end{theorem}}
\newcommand\prop{\begin{proposition}}
\newcommand\eprop{\end{proposition}}
\newcommand\lem{\begin{lemma}}
\newcommand\elem{\end{lemma}}
\newcommand\fct{\begin{fact}}
\newcommand\efct{\end{fact}}
\providecommand\qed{\hfill$\quad\Box$}
\newcommand\pr{{Proof:\;}}
\newcommand\prc{\par\noindent{\em Proof of Claim: }}
\newcommand\dom{{\mathrm{dom}}}
\newcommand\Pn{{P_n}}
\newcommand\ord{{ord}}
\newcommand\rk{{rk}}
\newcommand\G{{G^{\n}}}
\newcommand\car{{\rm{char}}}
\newcommand\M{{{\mathcal M}}}
\newcommand\N{{{\mathcal N}}}
\newcommand\K{{{\mathcal K}}}
\newcommand\D{{{\mathcal D}}}
\newcommand\A{{{\mathcal A}}}
\newcommand\Ps{{{\mathcal P}}}
\newcommand\Fs{{{\mathcal F}}}
\newcommand\E{{\mathcal E}}
\newcommand\X{{{\bold{X}}}}
\newcommand\x{{{\bold{x}}}}
\newcommand\y{{{\bold{y}}}}
\newcommand\s{{{\bold{s}}}}
\newcommand\bd{{{\bold{d}}}}
\newcommand\bu{{{\bold{u}}}}
\newcommand\bc{{{\bold{c}}}}
\newcommand\ba{{{\bold{a}}}}
\newcommand\be{{{\bold{e}}}}
\newcommand\bb{{{\bold{b}}}}
\newcommand\z{{{\bold{z}}}}
\newcommand\w{{{\bold{w}}}}
\newcommand\bv{{{\bold{v}}}}
\newcommand\ta{{{\bold{t}}}}
\newcommand{\ra}{{{\bold {r}}}}
\newcommand\Y{{{\bold{Y}}}}
\renewcommand\O{{{\mathcal O}}}
\renewcommand\L{{{\mathcal L}}}
\newcommand\Th{{\text{Th}}}
\newcommand\IZ{{\mathbb Z}}
\newcommand\IQ{{\mathbb Q}}
\newcommand\IR{{\mathbb R}}
\newcommand\IN{{\mathbb N}}
\newcommand\IC{{\mathbb C}}
\newcommand\F{{\mathbb F}}
\newcommand\Se{{\mathcal S}}
\newcommand\V{{\mathcal V}}
\newcommand\W{{\mathcal W}}
\newcommand\T{{\mathcal T}}
\newcommand\si{{\sigma}}
\newcommand\n{{\nabla}}
\newcommand{\RCF}{\mathrm{RCF}}
\newcommand{\ACF}{\mathrm{ACF}}
\newcommand{\DCF}{\mathrm{DCF}}
\newcommand{\ACVF}{\mathrm{ACVF}}
\newcommand{\DLg}{\mathrm{DLg}}
\newcommand{\DL}{\mathrm{DL}}
\newcommand{\divi}{\mathrm{div}}
\newcommand{\td}{\mathrm{td}}
\newcommand{\PCF}{p\mathrm{CF}}
\newcommand{\CODF}{\mathrm{CODF}}
\newcommand{\Ecl}{\mathrm{ecl}}
\newcommand{\dcl}{\mathrm{dcl}}
\newcommand{\acl}{\mathrm{acl}}
\newcommand{\Cl}{\mathrm{cl}}
\newcommand{\IFTA}{\mathrm{IFT}}
\newcommand{\LFF}{\mathrm{LFF}}
\newcommand\C{{\mathcal C}}
\newcommand\Lr{{{\mathcal L}_{\text{rings}}}}
\newcommand\Lrd{{{\mathcal L}^*_{\text{rings}}}}
\newcommand\B{{\mathcal B}}
\newcommand\La{{\mathcal L}}
\def\U{{ \mathfrak U}}
\def\B{{\mathfrak B}}
\def\I{{\mathcal I}}
\def\U{{\mathcal U}}
\def\Ma{{\mathfrak M}}
\def\Ge{{\mathfrak D}}
\let\le=\leqslant
\let\ge=\geqslant
\let\subset=\subseteq
\let\supset=\supseteq
\author{Fran\c coise {Point}$^{(\dagger)}$}
\address{Department of Mathematics (De Vinci)\\ UMons\\ 20, place du Parc 7000 Mons, Belgium}
\email{point@math.univ-paris-diderot.fr}
\thanks{$(\dagger)$ Research Director at the "Fonds de la Recherche
  Scientifique (F.R.S.-F.N.R.S.)"}
\author{Nathalie {Regnault}}
\address{Department of Mathematics (De Vinci)\\ UMons\\ 20, place du Parc 7000 Mons, Belgium}
\email{nathalie.regnault@umons.ac.be}
\title{Exponential topological fields with a generic derivation}
\date{\today}

\begin{abstract}  We axiomatize a class of existentially closed differential expansions of exponential topological fields where the derivation is an $E$-derivation.
 We apply our results to differential expansions of, on the one hand the field of real numbers endowed with $exp(x)$, the classical exponential function defined by its power series expansion, and on the other hand the field of p-adic numbers endowed with the function $exp(px)$ defined on the subring of $p$-adic integers where  $p$ is a prime number strictly bigger than $2$ (or with $exp(4x)$ when $p=2$).
\end{abstract}
\subjclass{03C60, 03C10, 12L12,12H05}
\keywords{exponential field, differential field, existentially closed}
\maketitle
\section{Introduction}
The problem we address here is the following:  given an elementary class of existentially closed exponential topological fields of characteristic $0$ (where possibly the exponential function $E$ is partially defined) whether the class of existentially closed differential expansions is again an elementary class and if this is the case how it can be axiomatized. The model-complete theories of exponential fields we include in our analysis are the theory of the field of real numbers with the exponential function and the field of p-adic numbers with the exponential function restricted to the subring of p-adic integers. 
The derivations $\delta$ we consider are 
$E$-derivations, namely $\delta(E(x))=\delta(x)E(x)$. We answer the question above as follows.

\par We place ourselves in topological fields endowed with a definable $V$-topology, namely a definable topology which is either induced by an archimedean absolute value or a non-trivial valuation \cite[Section 3]{PZ}. The condition that the topology is a $V$-topology is not directly used, but all our examples are endowed with such topology and at some point we assume that our fields satisfy an implicit function theorem which is known to hold for polynomial functions for t-henselian fields (see \cite{PZ} and section \ref{top}).
\par Given an $\L$-theory $T$ of fields, where $\L$ is a relational expansion of the language of fields together with a unary function symbol for the exponential function,
we denote by $T_\delta$ the $\L\cup\{\delta\}$-theory consisting of $T$ together with an axiom expressing that $\delta$ is an $E$-derivation (but $\delta$ is not assumed to be continuous).
\par Then given a model-complete theory $T$ of topological $\L$-fields endowed with a definable $V$-topology, we axiomatize the class of existentially closed expansions by an $E$-derivation of models of $T$, provided the class of models of $T$ satisfy two properties: an implicit function theorem and the lack of flat functions (Theorem \ref{ec}). Those two properties are known to hold when $T$ is the theory of $(\IR,exp)$, or $T$ is the theory of $(\IQ_p,E_p)$, where $\IQ_p$ is the field of p-adic numbers,
or $T$ is the theory of $(\IC_{p},E_{p})$, 
 where $\IC_p$ is the completion of the algebraic closure of $\IQ_{p}$. Note that in these last cases, the exponential function is only partially defined (on the valuation ring) and $E_{2}(x):=exp(4x)$, $E_{p}(x):=exp(px)$, $p\neq 2$.
\par The model-completeness of the theory $T:=Th(\IR,exp)$ was shown by A. Wilkie  \cite{W}, of the theory $T:=Th(\IQ_p,E_p)$ or $T=Th(\IC_{p},E_{p})$ by N. Mariaule \cite{M}, \cite{M1} (section \ref{exa}).

The implicit function theorem is stated in the form of a first-order scheme of axioms $(\IFTA)_{e}$ (for definable functions corresponding to exponential polynomials, see Definition \ref{imp}) but even if we denote the lack of flat function property by the acronym $(\LFF)$ (Definition \ref{lff}), it is not clear that it is a first-order property in general.

The axiomatisation that we give of the existentially closed models of $T_\delta$ is explicit; we call the scheme of axioms $(\DL)_E$ and it expresses the property that when certain systems of exponential polynomials have a regular solution, then they have a differential solution close to that solution (Definition \ref{DL}).

We separate the two issues: whether the axiomatisation we propose indeed axiomatizes the existentially closed differential expansions (Theorem \ref{DLE}) and whether any differential expansion of a model of $T$ can be embedded in a model of this scheme of axioms (Theorem \ref{emb_delta}). To sum up our main result can be stated as follows.
 
\par {\bf Theorem} (later Theorem \ref{ec})
Let $T$ be a model-complete theory of topological $\L$-fields endowed with a definable $V$-topology.
Assume that $\K\models T$ and that the differential expansion $\K_{\delta}$ is a model of $T_\delta\cup (\DL)_{E}$. Then $\K_{\delta}$ is existentially closed in the class of  models of $T_{\delta}$. In particular if the theory $T_\delta\cup (\DL)_E$ is consistent, then it is model-complete.
In case the models of $T$ satisfy an implicit function theorem, namely the scheme $(\IFTA)_{e}$ and have the lack of flat functions property $(\LFF)$, then the theory $T_\delta\cup (\DL)_E$ is consistent.

\

Our main technical tool is the fact that (partial) exponential fields can be equipped with a closure operation $\Ecl$, defined using Khovanskii systems, which has the exchange property, and which coincides with the closure operation $\Cl$ defined using $E$-derivations \cite{K} (section \ref{Khov}). Otherwise we proceed along the same lines as in the model-completeness proof of $(\IR,exp)$.

\

Independently, this question has also been considered by A. Fornasiero and E. Kaplan in the following setting. Given an o-minimal expansion $\K$ of an ordered field which is model-complete and expanded with  a {\it compatible} derivation \cite{FK}, they show that indeed the class of existentially closed differential expansions is elementary and they provide an axiomatization. A derivation $\delta$ is {\it compatible} with $\K$ if for any $0$-definable $C^1$-function $f\colon U\to K$, where $U$ is an open subset of some cartesian product $K^n$, we have $\delta f(\bar u)=\sum_{i=1}^n \frac{\partial f}{\partial x_i}(\bar u) \delta(u_i)$, for any $\bar u\in U$. In particular in case $\K$ expands an exponential field, such derivation $\delta$ is an $E$-derivation. Their results apply to o-minimal fields $\K$ extending the field of real numbers $\IR$ and admitting an expansion to all restricted analytic functions. In order to show that there is a compatible derivation, they have at their disposal the quantifier elimination result of J. Denef and L. van den Dries on the expansion $\IR_{an}$ of $\IR$ with all these functions (with restricted division) and its extension by L. van den Dries, A. Macintyre and D. Marker for $\IR_{an, exp}$, where $exp$ is the exponential function given by the classical power series \cite{DMM2}.

So when $\delta$ is a compatible derivation, in case of $(\IR,exp)$, by uniqueness of the model-completion, one gets, following either approaches, the same class of existentially closed exponential differential fields. However, it is unclear in an ordered exponential field model of the theory of $(\IR, exp)$ whether any $E$-derivation is compatible. (We cannot apply the argument used by A. Fornasiero and E. Kaplan  since we don't have quantifier-elimination in the language of ordered fields together with the exponential function.) 

\

The plan of the paper is as follows.  

In section 2, we review the notion of partial exponential fields and of the corresponding closure operator, denoted by $\Ecl$-closure (Definition \ref{Kho}). It was introduced by A. Macintyre using the work of A. Khovanskii \cite{Mac}, then it plays a crucial role in the proof of A. Wilkie of the model-completeness of $(\IR,exp)$. Later in a purely algebraic context, J. Kirby linked the $\Ecl$-closure with the $\Cl$-closure, defined through $E$-derivations (Definition \ref{cl}). He showed that the two closure operators coincide using a result of J. Ax on the Schanuel property in differential fields of characteristic $0$. Using that last result, we show, under some conditions, how to extend an $E$-derivation (Lemma \ref{der}, Corollary \ref{der-ext-gener}) with some required properties (following the classical case, but replacing algebraic independence by $\Ecl$-independence).
Then in section 3, we recall the notion of $E$-varieties, generic points and torsors. We also recall the setting of fields endowed with a definable topology \cite{P}.
We define the class of differential expansions of exponential fields models of a model-complete theory $T$ for which we can show that the theory we describe and denote by $T_\delta^*$ is consistent, namely those satisfying an implicit function theorem $(\IFTA)_{e}$ (see Definition \ref{imp}) and a property $(\LFF)$ on differential ideals called the lack of flat functions in the ordered case (see Definition \ref{lff}). 
Both properties were shown to hold in o-minimal expansions of real-closed fields (or more generally in definably complete ordered fields), as well as
 in the classes of valued fields mentioned above, as shown by N. Mariaule (see section \ref{sec:imp}). 

In section 4, we finally introduce the theory $T_\delta^*$ consisting of the theory $T$ together with an axiom stating that $\delta$ is an $E$-derivation and a scheme of axioms $(DL)_E$. We show first that if the class of models of $T$ satisfy  $(\IFTA)_{e}$ and $(\LFF)$, then we can embed any model of $T_\delta$ in a model of $T_\delta^*$. Then we show that a model of $T_\delta^*$ is existentially closed in the class of models of $T_\delta$. The scheme of axioms $(\DL)_E$ can be compared to the axiomatization of  M. Singer of the closed ordered differential fields, denoted by $\CODF$. We also give a geometric interpretation of the scheme $(\DL)_E$, which is a priori not first-order.
 
In section 5, we give examples of topological fields to which we may apply our results.

Finally, in the last section, we show how to endow a topological exponential field of cardinality $\aleph_1$ which is first-countable and separable with an $E$-derivation which satisfies this scheme of axioms. When the topology is induced by an ordering we point out that such ordered field can also be made a model of $\CODF$. This kind of construction (for $\CODF$)  may be found in the work of M. Singer, and in the theses of C. Michaux and Q. Brouette.
\medskip
 \par {\bf Acknowledgments:} Part of these results appeared in the PhD thesis of Nathalie Regnault \cite{Re}.
 
\section{$E$-derivations}
\subsection{Preliminaries}\label{prelim}
We only consider commutative rings $R$ of characteristic $0$ with $1\neq 0$. Let $\IN^*:=\IN\setminus\{0\}$, $R^*:=R\setminus\{0\}$. Denote by $I(R)$ the subgroup of the invertible elements of $(R^*,\cdot,1)$. Given an ordered set $(I, <)$, denote $I_{\geq j}:=\{i\in I: i\geq j\}$ (respectively $I_{>j}:=\{i\in I: i>j\}$). 
\par Let $\L_{rings}:=\{+,\cdot,-,0,1\}$ be the language of rings; we will work in different expansions $\L$ of $\L_{rings}$ such as $\L_{E}:=\L_{rings}\cup\{E\}$ and $\L_{E,\delta}:=\L_{rings}\cup\{E,\delta\}$ where $E,\;\delta$ are unary functions. The $\L$-formulas will be possibly with parameters and when we want to specify them we will use $\L(B)$ with $B$ a set of constants. Similarly $\L$-definable sets will possibly be definable with parameters. Our notation for tuples will be flexible: $x$ (respectively $a$) will denote a tuple of variables (respectively a tuple of elements) but sometimes in order to stress that we deal with tuples we will use $\bar x$, respectively $\bar a$, or bold letters e.g. $\x$, $\ba$. In this section we will not make the distinction between an $\L$-structure $\M$ and its domain $M$ whereas from subsection \ref{top} on, we will distinguish them.
\dfn {\rm \cite{D}} An $E$-ring $R$ is a ring equipped with a morphism $E$ from the additive group $(R,+,0)$ to the multiplicative group $I(R)$ satisfying $E(0)=1$ and $\forall x\forall y\;(E(x+y)=E(x)\cdot E(y))$. (So an $E$-ring can be endowed with an $\L_{E}$-structure.) 
An $E$-field is a field which is an $E$-ring.
\edfn
\par We will also consider partial $E$-fields, and so the corresponding language contains a unary predicate for the domain of the exponential function.  We will first define partial $E$-domains.
\dfn Let $F$ be an integral domain, namely a commutative ring with no non-zero zero-divisors.
A partial $E$-domain is a two-sorted structure 
\[((F,+_F,\cdot_{F},0_{F},1_{F}), (A,+_A,0_{A}), E),\] where $(A,+_{A},0_{A})$ is an abelian group and
 \mbox{$E:(A,+_A,0_{A})\rightarrow (I(F), \cdot_F, 1_F)$ is a group morphism}. We identify $(A,+_{A},0_{A})$ with an additive subgroup of $(F,+_{F},0_{F})$ and to stress it, we will denote it by $A(F)$. When the domain of $E$ is clear from the context, we will also simply use the notation $(F,E)$, even though $E$ is only partially defined.
 \par A partial $E$-field $F$ is a partial $E$-domain which is a field.
 A partial $E$-subfield $F_{0}$ is a partial $E$-field which is a two-sorted substructure. We denote by $F_{0}(\bar a)_E$, where $\bar a\subset F$, the smallest partial $E$-subfield of $F$ containing $F_{0}$ and $\bar a$ and by $F_{0}\langle \bar a\rangle_E$ the smallest partial $E$-subring generated by $F_0$ and $\bar a$.
 When $F_0=\IQ$, we denote $\IQ\langle \bar a\rangle_E$ simply by $\langle \bar a\rangle_E$. To make the distinction with the $\L_{rings}$-substructure, we denote by $\IQ[\bar a]$ the subring generated by $\bar a$.
\edfn

Note that in  {\rm \cite[Definition 2.2]{K}}, one uses a stronger notion of partial $E$-fields, namely one requires that $A(F)$ is a $\IQ$-vector space, namely one endows $A(F)$ with scalar multiplications $(\cdot q)_{q\in \IQ}$. Since we don't use that stronger notion, instead when given two partial $E$-fields $F_0\subset F$, we require that $A(F_0)$ is  a pure subgroup of $A(F)$ (see definition below).
\nota \label{pure}  Let $F_0,\;F$ be two partial $E$-fields with $F_0$ a substructure of $F$. Then the subgroup $A(F_0)$ is pure in $A(F)$ iff for any $a\in A(F)$ and $n\in \IN^*$, if $na\in A(F_{0})$, then $a\in A(F_{0})$. 
We use the notation $A(F_0)\subset_1 A(F)$.
\enota

\ex \label{example} $\;$\\
\begin{enumerate}
\item Let $F$ be a partial $E$-field and consider the field of Laurent series $F((t))$ (or more generally a Hahn field (see below)).  Then, regardless of whether we put a topology on $F((t))$, we can always define $exp(x):=\sum_{i\geq 0} \frac{x^i}{i!}$ for $x\in tF[[t]]$. Indeed, by Neumann's Lemma, the element $exp(x)\in F[[t]]$ \cite[chapter 8, section 5, Lemma]{F}.
Then, we extend $E$ on $A(F)\oplus tF[[t]]$ as follows.
 Write $r\in A(F)\oplus tF[[t]]$ as $r_0+r_1$ where $r_0\in A(F)$ and $r_1\in t.F[[t]]$. Define $E$ on $A(F)\oplus tF[[t]]$ as follows: $E(r_0+r_1):=E(r_0) exp(r_1)$. So  $F((t))$ can be endowed with a structure of a partial $E$-field with $A(F((t))):=A(F)\oplus tF[[t]]$.
\item More generally, under the same assumption on $F$, let $(G,+,-,0,<)$ be an abelian totally ordered group, then the Hahn field $F((G))$ can be endowed with a structure of a partial $E$-field, defining $E$ on the elements $r\in A(F)\oplus F((G_{> 0}))$ similarly.
Namely decompose $r$ as $r_0+r_1$ with $r_0\in A(F)$ and $r_1\in F((G_{>0}))$. Then $exp(r_1)\in F((G_{\geq 0}))$ again by Neumann's Lemma and define $E(r):=E(r_{0}) exp(r_{1})$. So $A(F((G)))=A(F)\oplus F((G_{> 0}))$.
\item Let $\bar \IR:=(\IR,+,-,\cdot,0,1, E)$ where $E(x)=exp(x)$ defined above.
\item Let $\bar \IC:=(\IC,+,-,\cdot, 0,1, E)$ where $E(x)=exp(x)$.
\item Let $p$ be a prime number; when $p=2$ set $E_p(x):=exp(p^2x)$ and when $p>2$, set $E_p(x)=exp(px)$. Let $\IC_{p}$ be  the completion of the algebraic closure of the field of $p$-adic numbers $\IQ_{p}$ (in $\IC$). As examples of partial $E$-fields, we have the field of $p$-adic numbers $\bar \IQ_p:=(\IQ_{p},+,-,\cdot,0,1, E_{p})$ or  $\bar \IC_p:=(\IC_{p},+,-,\cdot,0,1,E_{p})$. In these two cases, $E_{p}$ is defined on the valuation ring $\IZ_p$ of $\IQ_{p}$ (respectively on the valuation ring $\O_p$ of $\IC_p$). 
\end{enumerate}
\eex 
\par We will investigate these examples further in section \ref{exa}.
Note that, when the field $F$ is endowed with a field topology and when  $\lim_{n\to \infty} \sum_{i\geq 0}^n \frac{x^i}{i!}$ exists, we can consider the (partial) function $x\mapsto  exp(x):=\lim_{n\to \infty} \sum_{i\geq 0}^n \frac{x^i}{i!}$. One can check that the domain of $exp(x)$ is a subgroup and a $\IQ$-vector space whenever $F$ is closed under roots.
\medskip
\dfn \label{E-der} Let $R$ be a (partial)  $E$-ring. An $E$-derivation $\delta$ is a unary function on $R$ satisfying:
 \begin{enumerate}
\item $\delta(a+b)=\delta(a)+\delta(b)$,
\item  the Leibnitz rule: $\delta(a b)=\delta(a) b+a \delta(b)$,
\item $\delta(E(a))=\delta(a) E(a)$.
\end{enumerate}
We will denote the differential expansion of $R$ by $R_{\delta}$.
 \edfn
 For example, let $F_{\delta}$ be a differential $E$-field ($\delta$ can be the trivial derivation). We have already seen how to extend $E$ on $F[[t]]$. Then we extend $\delta$ on the field of Laurent series $F((t))$ by setting $\delta(t)=1$ and by requiring it to be strongly additive. Then $\delta$ is again an $E$-derivation on $F((t))$. Indeed, for $x\in tF[[t]]$, we have $\delta(exp(x))=\sum_{i\geq 0} \delta(\frac{x^i}{i!})=\delta(x) exp(x)$ and for $x\in F[[t]]$ with $x= r_0+r_1$ where $r_0\in A(F)$ and $r_1\in t F[[t]]$, we have $\delta(E(r_0+r_1))=E(r_0) exp(r_1)\delta(r_1)+\delta(r_0) E(r_0) exp(r_1)=\delta(x) E(x)$. This makes $(F((t)), F[[t]], exp,\delta)$ a differential (partial) $E$-field.
 \  
 
 \nota Let $\delta$ be an $E$-derivation on $R$. For~$m\geqslant 0$ and~$a\in R$, we define 
\[
\delta^m(a):=\underbrace{\delta\circ\ldots\circ\delta}_{m \text{ times}}(a), \text{ with $\delta^0(a):=a$,}
\]
and~$\bar{\delta}^m(a)$ as the finite sequence~$(\delta^0(a),\delta(a),\ldots,\delta^m(a))\in R^{m+1}$. 
\par Similarly, given an element~$\ba=(a_1,\ldots,a_n)\in R^n$, we write
\[\bar{\delta}^m(\ba):=(a_1,\ldots,a_n,\ldots,{\delta}^m(a_1),\ldots,{\delta}^m(a_n))\in R^{(m+1)n}.
\] 
Denote by $\IQ\langle \ba \rangle_{E,\delta}$ the $E$-differential subring of $R$ generated by $\ba$ and $\IQ$.
\enota
 
 In section \ref{Khov}, we will consider in general the problem of extending $E$-derivations but first it is convenient to recall the notion of $E$-polynomials and differential $E$-polynomials.
 \subsection{Free exponential rings}\label{free}
\par The construction of free $E$-rings $\IZ[\X]^E$ on finitely many variables $\X:=(X_{1},\ldots,X_{n})$ (and more generally free $E$-rings $R[\X]^E$ over $(R,E)$) can be found in many places in the literature. We believe it is initially due to B. Dahn. The elements of these rings are called {\it $E$-polynomials} in the indeterminates $\X$.
Here we will briefly recall their construction, following \cite{D} and \cite{Mac}. When $n=1$, we will use the variable $X$ and since we will also use differential $E$-polynomials, we will also allow $\X$ to denote a tuple of countably many variables. 
\par Let $R$ be an $E$-ring. Then the ring $R[\X]^E$ is constructed by stages as follows:
let $R_{-1}:=R$, $R_{0}:=R[\X]$ and $A_{0}$ the ideal generated by $\X$ in $R[\X]$. 
Then $R_{0}=R\oplus A_{0}$. 
Let $E_{-1}=E$ on $R$ composed by the embedding of $R_{-1}$ into $R_{0}$.
\par For $k\geq 0$, set $R_{k}=R_{k-1}\oplus A_{k}$ and 
let $t^{A_{k}}$ be a multiplicative copy of the additive group $A_{k}$.  \par For instance for $k=1$, we get $R_{1}=R_{0}[t^{A_{0}}]$ and $A_{1}$ is a direct summand of $R_{0}$ in $R_{1}$. 
\par Then, put $R_{k+1}:=R_{k}[t^{A_{k}}]$ and let $A_{k+1}$ be the free $R_{k}-$submodule generated by $t^{a}$ with $a\in A_{k}-\{0\}$. 
We have $R_{k+1}=R_{k}\oplus A_{k+1}$. 
\par By induction on $k\geq 0$, one shows the following isomorphism: $R_{k+1}\cong R_{0}[t^{A_{0}\oplus\ldots\oplus A_{k}}],$ using the fact that $R_{0}[t^{A_{0}\oplus\cdots\oplus A_{k}}]\cong R_{0}[t^{A_{0}\oplus\cdots\oplus A_{k-1}}][t^{A_{k}}]$ \cite[Lemma 2]{Mac}.
\par We define the map $E_{k}: R_{k}\rightarrow R_{k+1}$, $k\geq 0$, as follows: $E_{k}(r'+a)=E_{k-1}(r') t^a$, where $r'\in R_{k-1}$ and $a\in A_{k}$.
\par Finally let $R[\X]^E:=\bigcup_{k\geq 0} R_{k}$ and extend $E$ on $R[\X]^E$ by setting $E(f):=E_k(f)$ for $f\in R_k$. It is easy to check that it is well-defined. Let $f\in R_{k+1}$, then $f=f_k+g$ where $f_k\in R_k$ and $g\in A_{k+1}$. So $E(f)=E(f_k) t^g$. By definition $E(f_k)=E_k(f_k)$ and so if $f_k=f_{k-1}+g_k$ with $f_{k-1}\in R_{k-1}$ and $g_k\in A_k$, we have $E(f_{k-1})=E_{k-1}(f_{k-1})t^{g_k}$. Unravelling $f$ in this way, we get that $E(f)=E(f_0)t^{g+g_k+\ldots+g_0}$ with $f=f_0+g_0+\cdots+g_k+g$, $f_0\in R,\;g_0\in A_0,\ldots, g_k\in A_k, g\in A_{k+1}$.

\par Finally note that the above construction can be extended when $R$ is a partial $E$-domain, the only change is that we only define $E(f)$ for $f$ as written above when $f_0\in A(R)$.
\medskip
\par Using the construction of $R[\bold{X}]^E$ as an increasing union of group rings, one can define on the elements of $R[\X]^E$ an analogue of the degree function for ordinary polynomials which measures the complexity of the elements; it takes its values in the class $On$ of ordinals and was described for instance in \cite[1.9]{D} for exponential polynomials in one variable. Here we deal with exponential polynomials in more than one variable and so we follow \cite[section 1.8]{Mac}.
\medskip
\par Let us denote by $totdeg_{\bold{X}}(p)$ the total degree of $p$, namely 
the maximum of $\{\sum_{j=1}^m i_{j}\colon$ for each monomial $X_{1}^{i_{1}}\cdots X_{m}^{i_{m}}$ occurring (nontrivially) in $p$ with $i_1,\ldots, i_m\in \N,\; m\in \IN_{\geq 1}\}$.

\par Then one defines a height function $h$ (with values in $\IN$) which detects at which stage of the construction the (non-zero) element is introduced.
\par Let $p(\X)\in R[\X]^E$, then $h(p(\X))=k$, if $p\in R_{k}\setminus R_{k-1}$, $k>0$ and $h(p(\X))=0$ if $p\in R[\X]$.

\par Using the freeness of the construction, one defines a function $rk$
\[
rk: R[\X]^E\to \IN:
\]
\par If $p=0$, set $rk(p):=0$, 
 \par if $p\in R[\X]\setminus\{0\}$, set $rk(p):=totdeg_{\X}(p)+1$ and 
 \par if $p\in R_{k}$, $k>0$, let $p=\sum_{i=1}^d r_{i} E(a_{i})$, where $r_{i}\in R_{k-1}$, $a_{i}\in A_{k-1}\setminus\{0\}$. Set
$rk(p):=d$.
\par Finally, one defines the complexity function $\ord$
\[
\ord: R[\X]^E\to On
\]  as follows. Write $p\in R_k$ as $p=p_{0}+p_{1}+\cdots+p_{k}$ with $p_0\in R_0$, $p_{i}\in A_{i}$, $1\leq i\leq k$. Define $\ord(p):=\sum_{i=0}^k \omega^i.rk(p_{i})$.
\par Note that if $p_{0}=0$, then there is $q\in R[\X]^E$ such that $\ord(E(q).p)<\ord(p)$ (the proof is exactly the same as the one in \cite[Lemma 1.10]{D}).
\medskip
\par On $R[\X]^E$, we define $n$ $E$-derivations $\partial_{X_{i}}$ as follows:  ${\partial_{X_i}}\restriction R=0$ and $\partial_{X_{i}} X_j= \delta_{ij}$, where $\delta_{ij}$ is the Kronecker symbol, $1\leq i, j\leq n$ \cite[Lemma 3.2]{D}.
\nota\label{dfninitial} Let $\X:=(X_{1},\ldots,X_{n})$ and let $\delta$ be an $E$-derivation on $R$. We consider $R\{\X\}^E$ the  ring of differential $E$-polynomials over $R$ in $n$ differential indeterminates $X_{1},\cdots, X_{n}$, obtained by extending $\delta$ on $R[\X]^E$ first by setting $\delta^{j+1}(X_{i})=\delta(\delta^j(X_{i}))$, $1\leq i\leq n$, $j\in \omega$, with by convention $\delta^0(X_{i}):=X_{i}$. Then by induction for $k>0$ and $p\in R_{k}\setminus R_{k-1}$, writing $p=\sum_{i=1}^d r_{i} E(a_{i})$ with $r_{i}\in R_{k-1}$ and $a_{i}\in A_{k-1}\setminus \{0\}$, we define $\delta(p)=\sum_{i=1}^d (\delta(r_{i})+r_{i} \delta(a_{i})) E(a_{i}).$ 
So $\delta$ is an $E$-derivation by construction and $R\{\X\}^E$ is the $E$-polynomial ring in indeterminates $\delta^j(X_{i}))$, $1\leq i\leq n$, $j\in \omega$. 
\par Let $p(\X)\in R\{\X\}^E$. Let $m\in \IN$ be the (differential) order of~$p$ (denoted by $\delta$-$\ord(p)$) as classically defined in differential algebra \cite[page 75]{Kolchin} (if $m=0$, then $p$ is an ordinary $E$-polynomial). In particular we have that $p$ can be written as $p^*(\bar{\delta}^m(\X))$ with $\bar \delta^m(\X)=(X_{1},\ldots,X_{n},\delta(X_{1}),\ldots,\delta(X_n),\ldots,\delta^m(X_1),\ldots, \delta^m(X_{n}))$ and $p^*$ an ordinary $E$-polynomial.
\enota

\lem \label{der-multi} Let $\delta$ be an $E$-derivation on $R$. Let $p\in R[\X]^E$. Then there exists  $p^{\delta}\in R[\X]^E$ such that in the ring $R\{\X\}^E$, 
$\delta(p)=\sum_{j=1}^n \delta(X_{j}) \partial_{X_j} p +p^{\delta}$. Moreover there is a tuple $\bar e$ of elements of $R$ such that $p\in \langle \bar e\rangle_{E}[\X]^E$ and $p^{\delta}\in \IQ(\langle \bar e\rangle_{E},\delta(\bar e))[\X]^E.$
Furthermore whenever $\delta$ is trivial on $R$, $p^{\delta}=0$.
\elem
\pr Decompose $p$ as: $p=p_{0}+\sum_{i=1}^k p_{i}$, with $p_{0}\in R[\X]$ and $p_{i}\in A_{i}$, $i>0$. We proceed by induction on $\ord(p)$, namely we assume that for all $q\in R[\X]^E$ with $\ord(q)<\ord(p)$, we have $\delta(q(\X))=\sum_{j=1}^n \delta(X_{j}) \partial_{X_j} q+q^{\delta}$ with $q^{\delta}$ satisfying the conditions of the statement of the lemma.
 \par If $ord(p)\in \omega$, namely $p\in R[\X]$, the statement of the lemma is well-known. Write $p(\X)=\sum a_{i_{1},\cdots, i_{n}} X_{1}^{i_{1}}\cdots X_{n}^{i_{n}}$, define $p^{\delta}:=\sum \delta(a_{i_{1},\cdots, i_{n}}) X_{1}^{i_{1}}\cdots X_{n}^{i_{n}}$. Then  $\delta(p(\X))=\sum_{j=1}^n \delta(X_{j}) \partial_{X_j} p+p^{\delta}$. Note that $p^{\delta}\in \delta(R)[\X]$ and $\ord(p^{\delta})\leq \ord(p)$. If $p$ is monic and $n=1$, then $\ord(p^{\delta})<\ord(p)$.
\par Now assume that $ord(p)\geq \omega$ and that the induction hypothesis holds.
\par Let $k>0$ and $p\in R_{k}\setminus R_{k-1}$. By additivity of the derivation, the way $\ord$ has been defined and the induction hypothesis, it suffices to prove it for $p\in A_{k}$.
Let $p=\sum_{i=1}^d r_{i} E(a_{i})$ with $r_{i}\in R_{k-1}$ and $a_{i}\in A_{k-1}\setminus \{0\}.$ 
By definition,  
$\delta(p)=\sum_{i=1}^d (\delta(r_{i})+r_{i} \delta(a_{i})) E(a_{i})$ and by induction hypothesis, $\delta(r_i)=\sum_{j=1}^n \delta(X_{j}) \partial_{X_j} r_{i} +{r_{i}}^{\delta}$ and $\delta(a_i)=\sum_{j=1}^n \delta(X_{j}) \partial_{X_j} a_{i} +{a_{i}}^{\delta}.$
So we get that $\delta(p)=\sum_{j=1}^n \delta(X_{j})\partial_{X_j}(p)+\sum_{i=1}^d E(a_{i})({r_{i}}^{\delta}+r_{i} {a_{i}}^{\delta})$. Define $p^{\delta}:=\sum_{i=1}^d E(a_{i})({r_{i}}^{\delta}+r_{i} {a_{i}}^{\delta})$ $(\dagger)$. 

Let $\be_i$, $\bc_i$ be tuples of elements of $R$ such that $r_i\in \langle \be_i\rangle_{E}[\X]^E$, $a_i\in \langle \bc_i \rangle_{E}[\X]^E$. Then by induction hypothesis, $r_i^{\delta}\in \IQ(\langle \be_i\rangle_{E}, \delta(\be_i)) [\X]^E$, $a_i^{\delta}\in \IQ(\langle \bc_i\rangle_{E},\delta(\bc_i))[\X]^E$. Let $\bar e:=(\be_1,\ldots,\be_d)$ and $\bar c:=(\bc_1,\ldots,\bc_d)$. We have that $p\in \langle \bar e,\bar c\rangle_{E}[\X]^E$ and by $(\dagger)$, \\
$p^{\delta}\in \IQ(\langle \bar e,\bar c\rangle_{E},\delta(\bar e),\delta(\bar c))[\X]^E$ and if $\delta$ is trivial on $R$, then $p^{\delta}=0$.
\qed
 \rem Note that the map sending $p\in R[\X]^E$ to $p^{\delta}$ is an $E$-derivation since the set of $E$-derivations on $R\{\X\}^E$ is an $R\{\X\}^E$-module.
 \erem 
\subsection{Khovanskii systems}\label{Khov}
 \par Let $F_{\delta}$ be an expansion of a partial $E$-field by an $E$-derivation $\delta$ (see Definition \ref{E-der}).  
 Note that in \cite{D}, the condition of being an $E$-derivation was relaxed to: $\delta(E(x))=r \delta(x) E(x)$, for some $r\in R^*$. However if $\delta$ is an E-derivation, then $r \delta$ is also an $E$-derivation, with $r\in R$. More generally, the set of $E$-derivations on $R$ forms a $R$-module.
Using $E$-derivations, J. Kirby defined a closure operator $\Cl$ in $E$-rings and he showed that $\Cl$ induces a pregeometry on subsets of $R$ \cite[Lemma 4.4, Proposition 4.5]{K} (in particular it has the exchange property).

\dfn \cite[Definition 4.3]{K} \label{cl} Let $R$ be a partial $E$-ring and let $A$ be a subset of $R$. Then,
\[
\Cl^R(A):=\{u\in R \colon \delta(u)=0 {\rm \;\;for\;any\;}E-{\rm derivation\;}\delta\;{\rm vanishing\;on\;}A\}.
\]
\edfn
\par If $A\subset R$, then $\Cl^R(A)$ is an $E$-subring and if $R$ is field, it is an $E$-subfield.

\

Note that in the algebraic case, when an element $a$ is algebraic over a subfield endowed with a trivial derivation $\delta$, then $\delta(a)=0$ as well.
Later, we will see an analog of this property in the case of $E$-derivation working with another closure operator, namely $\Ecl$ (see Lemma \ref{der}).

 \nota\label{inj} Let $R$ be an $E$-ring. In section \ref{free}, we recalled the construction of the ring of $E$-polynomials in $\bold{X}:=(X_{1},\ldots,X_{n})$ over $R$. These $E$-polynomials induce functions from $R^n$ to $R$ and we will denote the corresponding ring of functions by $R[\bold{x}]^E$, where $\bold{x}:=(x_{1},\ldots,x_{n})$ \cite{D}. 
\par Note that when $R$ is a partial $E$-domain, we get the same ring of $E$-polynomials but with an $E$-polynomial we can only associate a partially defined function on $R$ (since $E$ is only defined on $A(R)$).

A necessary condition on $R$ under which the map sending an $E$-polynomial $p(\X)$ to the corresponding function $p(\x)$ is injective, is the following:  there exist $n$ $E$-derivations $\partial_{i}$ on $R[\x]^E$, which are trivial on $R$ and satisfy $\partial_{i}(x_{j})=\delta_{ij}$ \cite[Proposition 4.1]{D}. The proof uses the complexity function $\ord$ on exponential polynomials. It holds for instance when $R=\IC$ or $\IR$ \cite[Corollary 4.2]{D}.
 Let $f\in R[\x]^E$, we denote by $\partial_{i} f$, the function corresponding to the differential $E$-polynomial $\partial_{X_{i}} f$.
 \enota
\nota\label{Jac} Given $f_1,\ldots,f_n\in R[\X]^E$, $\bar f:=(f_1,\ldots,f_n)$, we will denote by  $J_{\bar f}(\X)$, the Jacobian matrix: $\left( \begin{array}{ccc}
     \partial_{X_1} f_1 & \cdots & \partial_{X_n} f_1 \\
      \vdots & \ddots & \vdots\\
      \partial_{X_1} f_n & \cdots & \partial_{X_n} f_n  
      \end{array} \right).$
\par As usual, we denote by $det(J_{\bar f}(\X))$ the determinant of the matrix $J_{\bar f}(\X)$; note that  
it is an $E$-polynomial. When we evaluate either $J_{\bar f}(\X)$ or its determinant at an $n$-tuple $\bb\in R^n$, we denote the corresponding values by $J_{\bar f}(\bb)$, respectively $det(J_{\bar f}(\bb))$.
\enota

\dfn \label{Kho} \cite{Mac}, \cite[Definition 3.1]{K} Let $B\subset R$ be partial E-domains. We will adopt the following convention. A {\it Khovanskii system over $B$} is a quantifier-free $\L_E(B)$-formula in free variables $\x:=(x_1,\ldots,x_n)$ of the form 
$$H_{\bar f}(\x):=\bigwedge_{i=1}^n f_i(\x)=0\;\wedge\; det(J_{\bar f}(\x))\neq 0,$$ for some $f_1,,f_n\in B[\X]^E.$
(We will sometimes omit the subscript $\bar f$ in the above formula and possibly make explicit the coefficients $\bar c\in B$ of the $E$-polynomials $\bar f$ in which case, we will use $H_{\bar c}(\x)$.)
\par Let $a\in R$, then $a\in \Ecl^R(B)$ if for some $f_1,\ldots,f_n\in B[\X]^E$ and $a_2,\ldots,a_n\in R$
\[
H_{\bar f}(a, a_2,\ldots,a_n)\;\;{\rm holds}
\]
(assuming that $a_i\in A(R)$, $1\leq i\leq n$, if needed for the $f_i$'s to be defined).
\edfn

\par The operator $\Ecl$ was used by A. Wilkie in his proof of the model-completeness of the theory of $(\bar \IR,exp)$, where $\bar \IR$ denotes the ordered field of real numbers \cite{W}, (see also \cite{JW}). Then J. Kirby extracted $\Ecl$ from this o-minimal setting and showed that it coincides with the closure operator $\Cl$ defined above (see Definition \ref{cl}) \cite[Theorem 1.1, Propositions 4.7, 7.1]{K}. 
Since the operator $\Cl^F$ on subsets of an $E$-field $F$ induces a pregeometry, we get a notion of dimension $\dim^F$ as follows:
\dfn \label{dim}
Let $F$ be a partial $E$-field, let $\x:=(x_{1},\ldots, x_{n})$ and let $C\subset A(F)$ with $C=\Cl^F(C)$, then for $m\leq n$,
\begin{align*}
\dim^F(\x/C)=m\;{\rm if\;}&{\rm there\; exist}\; x_{i_{1}},\ldots, x_{i_{m}}\;{\rm with\;} 1\leq i_{1}<\ldots<i_{m}\leq n\;{\rm such\; that}\\
&\; x_{i_{j}}\notin \Cl(x_{i_{\ell}}, C; 1\leq \ell\neq j\leq m) {\rm\; and}\; x_{i}\in \Cl^F(x_{i_{1}},\ldots, x_{i_{m}}, C), 1\leq i\leq n. 
\end{align*}
\edfn

In order to show that $\Cl\subset \Ecl$, J. Kirby uses a result of J. Ax on the Schanuel property in differential fields of characteristic $0$ \cite[Theorem 5.1]{K}, in order to show the following inequality: 
\[\td(\x, E(\x)/C)-\ell\! \dim_{\IQ}(\x/C)\geq \dim(\x/C),\;\;(\dagger)
\]
where $\td(\x, E(\x)/C)$ denotes the transcendence degree of the field extension $\IQ(\x, E(\x),C)$ of  $\IQ(C)$ and $\ell\! \dim_{\IQ}(\x/C)$ the  dimension of the quotient of two $\IQ$-vector spaces: $\langle\x, C \rangle_{\IQ}$ generated by $\x$ and  $C$, and $\langle C\rangle_{\IQ}$ generated by $C$.
(When $C=\emptyset$, $\ell\! \dim_{\IQ}(\x/C)$ is simply the linear dimension of the $\IQ$-vector-space generated by $\x$.)

\

\par From now on we will not make the distinction between the dimension induced by the closure operator $\Ecl^F$ or by the operator $\Cl^F$.

\

Let $\M_0\subset \M_1$ be two $\L$-structures. Recall that the notation  \mbox{$\M_0\subset_{ec} \M_1$}  means that any existential formula with parameters in $M_0$ satisfied in $\M_1$ is also satisfied in $\M_0$. Let us note some straightforward properties of the $\Ecl^F$ relation (and how it depends on $F$). 

\rem 
Let $F_{0}\subset F_{1}$ be two partial $E$-fields. Suppose that \mbox{$F_{0}\subset_{ec} F_{1}$,} then
\begin{enumerate}
\item $A(F_0)\subset_1 A(F_1)$ (see Notation \ref{pure}),
\item $\Ecl^{F_{1}}(F_{0})=F_{0}$, provided the number of solutions to a Khovanskii system in $F_1$ is finite, and
\item let $\varphi(x_1,\ldots,x_k,\bar y)$ be an existential formula, let $\ba\in F_{0}$, 
then if $\dim^{F_0}(\varphi(F_{0},\ba)/\langle \ba\rangle_E)\geq k$, then $\dim^{F_1}(\varphi(F_{1},\ba)/\langle\ba\rangle_E)\geq k$.
\end{enumerate}
\erem
\pr All these properties are rather straightforward. For convenience of the reader, we will indicate a proof for (2) and (3).
\par (2) Suppose that $u\in \Ecl^{F_{1}}(F_{0})$. So we can find $u_{1},\ldots,u_{n}\in F_{1}$ and a tuple $\bar f$ of $n+1$ $E$-polynomials $f_{1},\ldots, f_{n+1}$ with coefficients in $F_{0}$ such that $H_{\bar f}(u, u_{1},\ldots,u_{n})$ holds. Suppose the number of $n+1$-tuples of solutions of the Khovanskii system $H_{\bar f}$ is finite and equal to $\ell>0$. Then we can express by an existential formula with parameters in $F_{0}$ that it has at least $\ell$ solutions (in $F_{1}$). Since $F_{0}\subset_{ec} F_{1}$, that formula holds in $F_{0}$, so the tuple $(u, u_{1},\ldots,u_{n})$ should appear among the solutions, otherwise we would get one more solution in $F_{1}$, a contradiction.
\par (3) Let $b_{1},\ldots, b_{k}\in F_{0}$, $k>1$, be $\Ecl^{F_0}$-independent over $\langle \ba\rangle_E$ and be such that $\varphi(b_1,\ldots,b_k,\ba)$ holds. Let us show that $b_{1},\ldots, b_{k}$ remain $\Ecl^{F_1}$-independent over $\langle \ba\rangle_E$. We proceed by contradiction assuming for instance that $b_{k}\in \Ecl^{F_{1}}(b_{1},\ldots, b_{k-1}, \langle \ba\rangle_E)$. So there are $\ell\geq 1$ and $E$-polynomials $f_{1},\ldots, f_{\ell}$ with coefficients in $\langle b_{1},\ldots, b_{k-1},\ba\rangle_E$ and $u_{2},\ldots, u_{\ell}\in F_{1}$ such that 
$H_{\bar f}(b_{k},u_{2},\ldots,u_\ell)$ holds. Since $F_{0}\subset_{ec} F_{1}$, we can find $u_{2}',\ldots, u_{\ell}'\in F_{0}$ such that $H_{\bar f}(b_{k},u_{2}',\ldots,u_{\ell}')$ holds, witnessing that $b_{k}\in \Ecl^{F_{0}}(b_{1},\ldots, b_{k-1}, \langle \ba\rangle_E)$, a contradiction.
\qed

\

\par Recall that  in the case of the field of real numbers, A. Khovanskii showed that the number of solutions of a Khovanskii system is not only  finite but it is bounded independently of the coefficients of the system \cite[Proposition 3.1]{W} (he considered the field of real numbers expanded with a Pfaffian chain of functions).

\

\lem \label{der} Let $F_{0}\subset F_{1}\subset F$, where $F_{0}$ is a partial $E$-domain endowed with  an $E$-derivation $\delta$, $F_{1}$ is a partial $E$-field and $F$ is an $E$-field extension of $F_1$, which is endowed with an $E$-derivation extending $\delta$.
Then $\Ecl^{F_1}(F_0)$ can be endowed with a (unique) $E$-derivation extending $\delta$. 
%Further when $\delta$ is trivial on $F_0$, then $\delta$ is trivial on $\Ecl^{F_1}(F_0)$.
\elem
\pr  Let $u\in \Ecl^{F_1}(F_0)$, so for some $n$, there exist $u_1=u, u_2,\ldots,u_n\in F_1$ such that $H(u_1,\ldots,u_n)$ holds in $F_1$, for some Khovanskii system over $F_0$. Set $\bu:=(u_1,\ldots,u_n)$ and $\X:=(X_1,\ldots,X_n)$.
Let $f_1,\ldots,f_n\in F_0[\X]^E$ be such that 
\begin{equation}
\bigwedge_{i=1}^n f_i(\bu)=0\;\wedge\; det(J_{\bar f}(\bu))\neq 0.
\end{equation} 

Let $\delta^*$ be the given extension of $\delta$ to $F$. 
%For ease of notation let us still denote it $\delta$.
Applying $\delta^*$ to $f_1(\bu),\ldots,f_n(\bu)$, and using that the $E$-polynomials $f_1^{\delta^*},\ldots,f_n^{\delta^*}$ obtained in Lemma \ref{der-multi} are equal to $f_1^\delta,\ldots,f_n^\delta$, we get 
\begin{equation}
\left( \begin{array}{c}
     f_1^\delta(\bu) \\
      \vdots \\
      f_n^\delta(\bu)   
      \end{array} \right)+J_{\bar f}(\bu)
     \left( \begin{array}{c}
     \delta^*(u_1) \\
      \vdots \\
      \delta^*(u_n)   
     \end{array} \right) =    
     \left( \begin{array}{c}\delta(f_1(\bu)) \\ \vdots \\ \delta^*(f_n(\bu))\end{array}\right)=0
      \end{equation}
So, 
\begin{equation}  \label{3}    
  \left( \begin{array}{c}
     \delta^*(u_1) \\
      \vdots \\
      \delta^*(u_n)  
     \end{array}\right) =-J_{\bar f}(\bu)^{-1}\cdot  \left( \begin{array}{c}
     f_1^\delta(\bu) \\
      \vdots \\
      f_n^\delta(\bu)
      \end{array} \right)
      \end{equation}
      Note that $J_{\bar f}(\X)^{-1}=J^*_{\bar f}(\X)(det(J_{\bar f}(\X))^{-1}$, where $J^*_{\bar f}(\X)$ is the adjugate of $J_{\bar f}(\X)$. So $J_{\bar f}(\X)^{-1}$ is a matrix whose entries are rational $E$-functions with denominator $det(J_{\bar f}(\X)).$ 
      
Since $\Ecl$ has finite character, we may assume that $f_i\in \IQ(\langle \be_i \rangle_E)[\X]^E$ for some tuple $\be_i$ and 
 $f_i^{\delta}\in \IQ(\langle\be_i\rangle_{E}, \delta(\be_{i}))[\X]^E$ (see Lemma \ref{der-multi}). 
 Let $\bar e:=(\be_1,\ldots,\be_n)$;    
  so we can express each $\delta(u_i),\; 1\leq i\leq n$, as an $E$-rational function $t_{i,\bar f}(\bu)$ with coefficients in $\IQ(\langle \bar e, \delta(\bar e)\rangle_{E})$. So $\delta(u_{i})$ still belongs to $\Ecl^{F_{1}}(F_{0})$ since $\Ecl^{F_{1}}(F_{0})$ is a partial $E$-subfield of $F_{1}$.

 \par We can also express the successive derivatives $\delta^{\ell}(u_i)$, $1\leq i\leq n ,\;\ell\in \IN$, $\ell\geq 2$,  as $E$-rational function $t_{i,\bar f}^{\ell}(\bu)$ with coefficients in $\IQ(\langle \bar\delta^\ell(\bar e)\rangle_{E})$.
 Note that the $E$-polynomial appearing in the denominator is a power of $det(J_{\bar f}(\X))$. We set $t_{i,\bar f}^{1}(\bu)=t_{i,\bar f}(\bu)$.
 We can proceed in the same way for every element of $\Ecl^{F_{1}}(F_{0})$ and so it is endowed with an $E$-derivation extending $\delta$ on $F_{0}$. By construction it does not depend on the extension $F$ (also since $\Ecl=\Cl$ there is only one such $E$-derivation extending $\delta$ on $F_0$). \qed

\

\par For later use, we need to make explicit the form of the rational functions $t_{i,\bar f}^{\ell}(\bu)$ as a function of $\bu$ but also of the coefficients of $\bar f$ (see section \ref{ec}). 
\nota\label{rat} By equation (3), we have $\delta(y_i^0):=t_{i,\bar f}(\y^0)$ where $\y^0:=(y_1^0,\ldots,y_n^0)$ and $t_{i,\bar f}(\y^0)$ is obtained by multiplying the matrix $-J_{\bar f}(\y^0)^{-1}$ by the column vector 
$ \left( \begin{array}{c}
     f_1^\delta(\y^0) \\
      \vdots \\
      f_n^\delta(\y^0)   
      \end{array} \right).$
 
 Now by Lemma \ref{der-multi}, there are tuples $\x_i^0\in F_0$ such that $f_i$ belongs to $\IQ\langle\x_i^0\rangle_E[\X]^E$ and
$f_i^{\delta}\in \IQ(\langle \x_i^0\rangle_{E}, \delta(\x_i^0))[\X]^E$.  To $f_i^{\delta}$, we associate an $E$-rational function $f_i^{\delta,*}$ by replacing $\delta(\x_i^0)$ by the tuple $\x_i^1$.
 
Let $\bar x^j:=(\x_1^j,\ldots,\x_n^j)$ with $0\leq j$. Then we re-write $t_{i,\bar f}(\y^0)$ as an $E$-rational function with coefficients in $\IQ$, namely as $t_{i,\bar f}^*(\y^0; \bar x^0, \bar x^1))$, $1\leq i\leq n$. Set $t_{i,\bar f}^{1,*}:=t_{i,\bar f}^*$ and $\ta_{\bar f}^{1,*}:=(t_{1,\bar f}^{1,*},\ldots, t_{n,\bar f}^{1,*})$.
Then we define $t_{i,\bar f}^{2,*}$ by applying $\delta$ and substituting $t_{j,\bar f}(\y^0)$ to $\delta(y_j^0),$ $1\leq j\leq n$, and $\bar x^j$ to $\delta(\bar x^{j-1})$, $2\geq j\geq 1$. 
 So we get an $E$-rational function $t_{i,\bar f}^{2,*}(\y^0; \bar x^0, \bar x^1,\bar x^2)$, $1\leq i\leq n$. We iterate this procedure, namely we apply $\delta$ to $t_{i,\bar f}^{\ell,*}$, we substitute $t_{k,\bar f}^{1,*}(\y^0)$ to $\delta(y_k^0),$ $1\leq k\leq n$, and $\bar x^{j+1}$ to $\delta(\bar x^j)$, $j\geq 0$, to obtain $t_{i,\bar f}^{\ell+1,*}(\y^0; \bar x^0,\ldots,\bar x^{\ell+1}),$ $1\leq i\leq n$. We denote $\ta_{\bar f}^{\ell+1,*}:=(t_{1,\bar f}^{\ell+1,*},\ldots, t_{n,\bar f}^{\ell+1,*})$.
\enota 

\cor \label{der-ext-gener} Let $F_{0}\subset F_1$ be two partial $E$-fields and let $\delta$ be an $E$-derivation on $F_0$.  Assume that we have an $E$-field extension $F$ of $F_1$ which is endowed with an $E$-derivation $D$ extending $\delta$. 
\par Let $c_1,\ldots, c_\ell\in F_1$ are $\Ecl^{F_1}$-independent over $F_0$ and $d_1,\ldots, d_\ell\in F_0$, then there is on $F_1$ an $E$-derivation $\tilde \delta$ extending $\delta$ such that $\tilde \delta(c_i)=d_i$, $1\leq i\leq \ell$.
\par Let $c_1,\ldots, c_\ell\in F_1$ be $\Ecl^{F}$-independent over $F_0$. Then, for any choice of $d_1,\ldots, d_\ell\in F_1$, there is an $E$-derivation $\tilde \delta$ (on $F$) extending $\delta$ on $F_{0}$ and such that $\tilde \delta(c_i)=d_i$, $1\leq i\leq \ell$. 
\ecor
\pr Let $c_1,\ldots, c_\ell\in F_1$ be $\Ecl^{F_1}$-independent over $F_0$ and  $d_1,\ldots, d_\ell\in F_0$. There are $\ell$ E-derivations $\delta_i$ on $F_{1}$ which are zero on $F_0$ and such that $\delta_i(c_j)=\delta_{ij}$, $1\leq i,\; j\leq \ell$ \cite[Proposition 7.1]{K}.
Set $\tilde F$ be the field of fractions of $\langle F_0, \bc\rangle_E\subset F_{1}$, with $\bc:=(c_{1},\ldots,c_{\ell})$ and consider $\tilde \delta:={D}_{\restriction \tilde F}+\sum_{i=1}^{\ell} (d_i-D(c_i)) {\delta_i}_{\restriction \tilde F}$. We have $\tilde\delta(\tilde F)\subseteq F_0$. 
Then we consider a maximal partial $E$-field extension $F_2$ of $\tilde F$ included in $F_1$ and endowed with a derivation $\delta_2$ extending $\tilde \delta$.
By the preceding lemma $F_2=\Ecl^{F_1}(F_2)$. If it is properly included in $F_{1}$, we can choose an element $a\in F_1\setminus \Ecl^{F_1}(F_2)$ and construct an $E$-derivation extending $\delta_2$ on the $E$-subring generated by $F_2$ and $a$, sending $a$ to $1$ (or to any element of $F_2$). Then we extend that derivation on the field of fractions of $\langle F_2, c\rangle_E$, contradicting the maximality of $F_2$.
\par Now let $d_1,\ldots, d_\ell\in F_1$ and assume $c_1,\ldots, c_\ell\in F_1$ are $\Ecl^{F}$-independent over $F_0$. Again there are $\ell$ E-derivations $\delta_i$ on $F$ which are zero on $F_0$ and such that $\delta_i(c_j)=\delta_{ij}$, $1\leq i,\; j\leq \ell$. 
 
We define $\tilde \delta$ as $D+\sum_{i=1}^{\ell} (d_i-D(c_i))\delta_i$; since the set of $E$-derivations on $F$ forms an $F$-module, this is an $E$-derivation which extends $\delta$ by construction and which sends $c_{i}$ to $d_{i}$, $1\leq i\leq \ell$. 
Let $F_{1}'$ be the field of fractions of $\langle F_0, \overline{\tilde\delta}^{n}(\bc); n\in \IN\rangle_E$; it is a partial E-field closed under $\tilde \delta$. We proceed as in the first part of the proof in order to show that a maximal partial $E$-field extension of $F_{1}'$ endowed with a derivation extending $\tilde \delta$, is equal to $F$.
\qed 

\section{E-varieties and topological exponential fields}
\subsection{E-varieties}
Let $K$ be a (partial) $E$-field. Let $f\in K[\X]^E$, $\X:=(X_1,\ldots,X_n)$, and $\ba\in K^{n}$. Denote by $\nabla f:= (\partial_{X_1} f(\X), \ldots,  \partial_{X_n} f(\X))$
 and $\nabla f(\ba):= (\partial_{X_1} f(\ba), \ldots,  \partial_{X_n} f(\ba)).$ 

\dfn \label{var} Let $g_{1},\ldots, g_{m}\in K[\X]^E$ and let 
\[V_{n}(g_{1},\ldots,g_{m}):=\{\ba\in K^{n}\colon \;\bigwedge_{i=1}^{m} g_{i}(\ba)=0\}.
\]
By $E$-variety (defined over $K$), we mean a definable subset of some $K^n$ of the form $V_{n}(\bar g)$ for some $\bar g\in K[\X]^E$. When we consider the elements of an $E$-variety in an extension $\tilde K$ of $K$, we denote the set of these elements by $V(\tilde K)$. Let $V$ be an $E$-variety, then $\ba$ is a regular point of $V$ if for some $\bar g$, $V=V_{n}(\bar g)$ and $\nabla g_{1}(\ba),\ldots,\nabla g_{m}(\ba)$ are linearly independent over $K$ (note that this implies that $m\leq n$).
\edfn
\par In the following, we will make a partition of variables of the $g_{i}$'s and consider the regular zeroes with respect to a subset of the set of variables.
\nota \label{gradient}
Let $0<n_{0}\leq n$  and let $f\in K[\X]^E$. Denote by 
\begin{equation}
\nabla_{n_{0}}  f:=(\partial_{X_{n-n_{0}+1}} f,\ldots, \partial_{X_{n}} f).
\end{equation}
\par Consider the following subset of $V_{n}(\bar g)$, with $m\leq n_0$: 
\begin{equation}
V_{n,n_{0}}^{reg}(\bar g):=\{\bb\in K^n\colon \bigwedge_{i=1}^m g_{i}(\bb)=0\;\&\;\nabla_{n_{0}} g_{1}(\bb),\ldots,\nabla_{n_{0}} g_{m}(\bb) \;{\rm are}\; K\!-{\rm linearly\; independent} \}.
\end{equation} 
Note if $m> n_{0}$, then $V_{n,n_{0}}^{reg}(\bar g)= \emptyset$. In case $n_{0}=n$, we simply denote $V_{n,n}^{reg}(\bar g)$ by $V_{n}^{reg}(\bar g)$ (or $V^{reg}(\bar g)$). 
\enota
\par Furthermore, we need the following variant. 
Let $\bar i:=(i_{1},\ldots,i_{n_{0}})$ be a strictly increasing tuple of natural numbers between $1$ and $n$ (of length $1\leq n_{0}\leq n$). Then for $f\in K[\X]^E$, we denote by 
\begin{equation}
\nabla_{\bar i}  f:=(\partial_{X_{i_{1}}} f,\ldots, \partial_{X_{i_{n_{0}}}} f).
\end{equation}
We consider the following subset of $V_{n}(\bar g)$, with $m\leq n_{0}=\vert \bar i\vert$: 
\begin{equation}
V_{\bar i}^{reg}(\bar g):=\{\bb\in K^n\colon \bigwedge_{i=1}^m g_{i}(\bb)=0\;\&\;\nabla_{\bar i} g_{1}(\bb),\ldots,\nabla_{\bar i} g_{m}(\bb)\;{\rm are}\; K\!-\!{\rm linearly\; independent} \}.
\end{equation} 
\par We will denote the formulas corresponding to these definable sets by $\x\in V_n(\bar g)$, respectively $\x\in V_n^{reg}(\bar g)$.

\subsection{Generic points}
\par Let $K\subseteq L$ be partial $E$-fields. In section \ref{Khov}, we have seen that $\Ecl^L$ is a closure operator which coincides with $cl^L$ to which we associated the dimension function $\dim^L(\cdot/K)$ (see Definition \ref{dim}). As usual one defines the dimension of a definable subset $B\subset L^n$ and the notion of generic points in $B$. 
\dfn \label{dim2} Let $B$ be a definable subset of $L^n$ defined over $K$. The dimension of $B$ over $K$ is defined as
$\dim^L(B/K):=\sup\{\dim^L(\bb/K)\; : \; \bb\in B\}$. Let $\bb\in B$, then $\bb$ is a generic point of $B$ over $K$ if $\dim^L(\bb/K)=\dim^L(B/K)$. 
\edfn

We will need the following notion of subtuples. 
\nota\label{subtuple}
Let $\ba:=(a_{1},\ldots,a_{n})$ be an $n$-tuple in $K$ and let $\X:=(X_1,\ldots,X_n)$. Let $0<m<n$ and let $\{i_{1},\ldots,i_{m}\}\dot\cup\{j_{1},\ldots,j_{n-m}\}$ be a partition of $\{1,\ldots,n\}$, with $1\leq i_{1}<\ldots<i_{m}\leq n$ and $1\leq j_{1}<\ldots<j_{n-m}\leq n$. 

A {\it $m$-subtuple} of $\ba$ is a $m$-tuple denoted by $\ba_{[m]}$ of the form $(a_{i_{1}},\ldots,a_{i_{m}})$ and we denote by $\ba_{[n-m]}:=(a_{j_{1}},\ldots,a_{j_{n-m}})$. 

Given an $E$-polynomial $f(\X)\in K[\X]^E$, we denote either by $f(\ba_{[n-m]},X_{i_{1}},\ldots,X_{i_{m}})$ or by $f_{\ba_{[n-m]}}(X_{i_{1}},\ldots,X_{i_{m}})$ the $E$-polynomial obtained from $f$ when substituting  for $X_{j_{i}}$ the element $a_{j_{i}}$, $1\leq i\leq n-m$. We adopt the same convention for $\L_{E}$-terms.
\enota
\rem\label{polmin}
 Let $\bar f=(f_1,\ldots,f_m)\subseteq K[\X]^E$, $\ba:=(a_1,\ldots,a_n)\in V_n^{reg}(\bar f)\subseteq L^n$, $1\leq m\leq n$. Then:
 \begin{enumerate}
 \item There is a $m$-subtuple $\ba_{[m]}$ of $\ba$ and a Khovanskii system over $K\langle \ba_{[n-m]}\rangle^E$ such that $H_{\bar f_{\ba_{[n-m]}}}(\ba_{[m]})$ holds.
 \item In particular $\dim^L(\ba/K)\leq n-m$ and if $V_n(\bar f)=V_n^{reg}(\bar f)$, then $\dim^L(V_n(\bar f)/K)\leq n-m$.
 \end{enumerate}
\erem
\subsection{$E$-ideals and differentiation}
\par Let $R$ be a partial $E$-ring. Let $\X:=(X_{1},\ldots, X_{n})$
and $\X_{\hat i}$ be the tuple $\X$ where $X_{i}$ is removed, $1\leq i\leq n$. Similarly for $\ba\in R^n$, we denote $\ba_{\hat{i}}:=(a_1,\ldots,a_{i-1},a_{i+1},\ldots,a_{n})$.

 \dfn  Let $I\subseteq R$ be an ideal of $R$. Then $I$ is an $E$-\textit{ideal} if 
 \[
 (r\in I\rightarrow E(r)-1\in I).
 \]
 A prime $E$-ideal is a prime ideal which is an $E$-ideal.
 \edfn
In $R[\X]^E$, an example of a prime $E$-ideal is $Ann^{R[\X]^E}(\ba):=\{f\in R[\X]^E:\;f(\ba)=0\}$. (When the context is clear we will omit the superscript $R[\X]^E$.) 
\smallskip
\par As usual the definition of $E$-ideal is set-up in such a way that if $I\subseteq R$ is an $E$-ideal, then on the quotient $R/I$, we have a well-defined exponential function given by: 
$$E(r+I):=E(r)+I$$
for $r\in A(R)$. So  $(R/I, E)$ is a again a partial $E$-ring.
\medskip 
\par We now recall a result from A. Macintyre on $E$-ideals closed under partial derivation. Note that the proof is purely algebraic, using that one can measure the complexity of exponential polynomials (see section \ref{free}).
\fct \cite [Theorem 15 and Corollary]{Mac}\label{dpnul} Let $R$ be a partial $E$-domain. Let $1\leq i\leq n$.
Let $I\subseteq R[\X]^E$ be an $E$-ideal closed under the $E$-derivation $\partial_{X_{i}}$.
 Then either $I=0$ or $I$ contains a non-zero element of $R[\X_{\hat i}]^E$.  
In particular, if $I\neq 0$ is closed under all $E$-derivations $\partial_{X_{i}}$, $1\leq i\leq n$ and $R$ is a field, then  $I=R[\X]^E$.
\efct

Let $K\subseteq L$ be  partial $E$-fields. A consequence of Fact \ref{dpnul} is that $\Ecl^L$-independent elements over $K$ do not satisfy any hidden exponential-algebraic relations over $K$.
\cor\label{norel} Let $\ba:=(a_1,\ldots,a_n)\in  L^n$ be such that $a_1,\ldots,a_n$ are $\Ecl^L$-independent over $K$. 
Then there is no $g\in K[\X]^E\setminus\{0\}$ such that $g(\ba)=0$.  
\ecor
\begin{proof} By the way of contradiction assume there is $g\in K[\X]^E\setminus\{0\}$ be such that $g(\ba)=0$.
  Then for  $i=1,\ldots,n$, $\partial_{X_i} g(\ba)=0$ otherwise $a_i\in ecl^L(K(\ba_{\hat i}))$. (Indeed, letting $h(X):=g(\ba_{\hat{i}},X)$, we would have $H_{h}(a_{i})$.) Hence the ideal $Ann(\ba)$ is an $E$-ideal, closed under all partial $E$-derivations $\partial_{X_{i}}$, $1\leq i\leq n$. So by Fact \ref{dpnul}, since $Ann(\ba)\neq 0$, it is equal to $K[\X]^E$, a contradiction.
\end{proof}

\

Let $K_{\delta}$ be an expansion of the partial $E$-field $K$ by an $E$-derivation $\delta$ and let $\tilde K$ be an $E$-field extending $K$. Let $A\subset \tilde K^n$.
Let $I_K(A)\subseteq K[\X]^E$ be the set of $E$-polynomials with coefficients in $K$ which vanish on $A$, namely $I_K(A)=\bigcap_{\ba\in A} Ann^{K[\X]^E}(\ba)$. Note that it is an $E$-ideal as an intersection of $E$-ideals. 
\begin{definition} \label{torsor} For $A\subset \tilde K^n$, let $\tau(A)\subset \tilde K^{2n}$ be the \textit{$E$-torsor} of $A$ (over $K$), namely:
\[\tau(A):=\{(\ba,\bb)\in \tilde K^{2n}:\ba\in A \textrm{\,and\,} \sum_{i=1}^n \partial_{X_{i}} f(\ba) b_i+f^{\delta}(\ba)=0 \textrm{\;for\;all\;}f(\X)\in I_K(A)\}.
\]
\end{definition}
In case $\tilde K$ is endowed with a derivation $\tilde \delta$ extending $\delta$, if $\ba\in A$, then $(\ba,\tilde \delta(\ba))\in \tau(A)$.
\medskip
\par Note that if we can find $f_1(\X),\ldots, f_m(\X)\in Ann^{K[\X]^E}(\ba)$, $\ba\in A$, $1\leq m\leq n$ such that $\nabla f_1(\ba),\ldots,\nabla f_m(\ba)$  are $K$-linearly independent, then setting
\[ 
T_{\ba}:=\{\bb\in \tilde K^{n}:\;\sum_{i=1}^n \partial_{X_{i}} f(\ba)  b_i=0 \textrm{\;for\;all\;}f(\X)\in Ann^{K[\X]^E}(\ba)\},
\]
we have that $\ell \dim(T_{\ba})\leq n-m$. 

\begin{lemma}\label{Lang-tor} Let $K\subset_{ec} \tilde K$ be partial $E$-fields, and let $\delta$ be an $E$-derivation on $K$. Let $\bar f=(f_1,\ldots, f_m)\subseteq K[\X]^E$, $m\leq n$. Suppose that there are $(\ba,\bb)\in \tilde K^{2n}$
such that $(\ba,\bb)\in \tau(V_{n}^{reg}(\bar f))$ and $\dim^{K^*}(\ba)=n-m$.
Then there is an $E$-derivation $\tilde \delta$ (on an elementary extension of $K$, as partial $E$-fields) extending $\delta$ such that $\tilde\delta(a_i)=b_i$, for $i=1,\ldots,n$.
\end{lemma}
\begin{proof} Since $K\subset_{ec} \tilde K$ as partial $E$-fields, there is an embedding (of partial $E$-fields) of $\tilde K$ in a non-principal ultrapower $K^*$ of $K$ (which is the identity on $K$). Let $\delta^*$ be the derivation induced by $\delta$ on $K^*$.
\par Since $\ba\in V_{n}^{reg}(\bar f)$, we have that $\nabla f_{1}(\ba), \ldots, \nabla f_{m}(\ba)$ are $\tilde K$-linearly independent.
By permuting the coordinates of $\ba$, assume $\nabla_{m}f_{1}(\ba),\ldots,\nabla_{m}f_{m}(\ba)$ are $\tilde K$-linearly independent.
Set $\ba_{[n-m]}:=(a_{1},\ldots,a_{n-m})$ and $\ba_{[m]}:=(a_{n-m+1},\ldots,a_{n})$. Note that $det(J_{\bar f_{\ba_{[n-m]}}}(\ba_{[m]}))\neq 0$. Since  $\ba_{[m]}\in \Ecl^{\tilde K}(\ba_{[n-m]})$, $a_{1},\ldots, a_{n-m}$ are $\Ecl^{K^*}$-independent. 

By Corollary \ref{der-ext-gener}, there is an $E$-derivation $\tilde \delta$ on $K^*$ extending $\delta$ on $K$ and such that $\tilde \delta(a_{i})=b_{i}$, $1\leq i\leq n-m$. 

By assumption $(\ba,\bb)\in \tau(V_{n}^{reg}(\bar f))$. In particular $\bigwedge_{i=1}^m \sum_{j=1}^n \partial_{X_{j}} f_{i}(\ba) b_j+f_{i}^{\delta}(\ba)=0$. Breaking the sum $\sum_{j=1}^n \partial_{X_{j}} f_{i}(\ba) b_j$ in two parts: $\sum_{j=1}^{n-m} \partial_{X_{j}} f_{i}(\ba) b_j$, $\sum_{j=n-m+1}^{n} \partial_{X_{j}} f_{i}(\ba) b_j$, we get:

\begin{equation}  \label{4}    
  \left( \begin{array}{c}
     b_{n-m+1} \\
      \vdots \\
     b_n  
     \end{array} \right) =-(J_{\bar f_{\ba_{[n-m]}}}(\ba_{[m]}))^{-1}  \left( \begin{array}{c}
     f_1^\delta(\ba)-\sum_{j=1}^{n-m} \partial_{X_{j}} f_{1}(\ba) b_j \\
      \vdots \\
      f_m^\delta(\ba)- \sum_{j=1}^{n-m} \partial_{X_{j}} f_{m}(\ba) b_j 
      \end{array} \right)
      \end{equation}
 Since $\tilde \delta(a_{i})=b_{i}$, $1\leq i\leq n-m$, we get that $\tilde\delta(\ba)=\bb$.   
\end{proof}
\rem Keeping the same notation as the above lemma, we will show later (see Proposition \ref{generegalter}) that, under the condition that $\K$ satisfies an implicit function theorem (see Definition \ref{imp}), in a sufficiently saturated elementary extension of $K$, we can find a tuple $\ba\in V_{n}^{reg}(\bar f)$ with $\dim^{K^*}(\ba)=n-m$.
\erem

\subsection{Topological $E$-fields} \label{top} In section 2, we introduced the notion of a partial $E$-field $F$ as a  two-sorted structure $(F,A(F),E)$ where $F$ is a field and $A(F)$ is an additive group (that we identified with a subgroup of the additive group of $F$) and $E:A(F)\to F^*$, a morphism from $A(F)$ to the multiplicative group of $F$. 

\par In this section we will revert to a one-sorted setting and we work with topological fields, namely the field operations, together with the exponential function are continuous (when defined). 
Let $\L^-$ be a relational extension of $\L_{E}$ and let $\L:=\L^-\cup \{{}^{-1}\}$. Starting with a two-sorted structure $(K,A(K),E)$ which is a partial $E$-field, we will consider $\L$-structures $\K$ with domain $K$ and with the convention that the domain of $E$ is $A(K)$, an additive subgroup of $K$. 
Classically one requires that the functions are defined everywhere and so for instance, one extends ${}^{-1}$ by the rule $0^{-1}=0$. 
Instead here we proceed as follows. We use the fact that one can associate with any $\L$-term $t(x_1,\ldots,x_n)$ a quantifier-free formula $D_t(x_1,\ldots,x_n)$ which exactly holds on the domain of definition (also denoted by $D_t$) of $t$ (see for instance \cite[Section 2]{Wolter}). (One works by induction on the complexity of terms in a similar way as we did in section 2.) Furthermore letting $\partial_i t(x_1,\ldots,x_n)$ be the formal derivative of $t$ with respect to $x_i$ (with the rule $\partial (E(x))=E(x)$), $D_t(x_1,\ldots,x_n)\leftrightarrow D_{\partial_i t}(x_1,\ldots,x_n)$, $1\leq i\leq n$. The advantage of proceeding in this way, instead of extending the functions when undefined, is that one can then require that the terms induce continuous functions (or continuously differentiable (i.e. C$^1$-functions), or C$^{\infty}$, or analytic functions) on their domains of definition.
\par  Let $\V$ denote a basis of neighbourhoods of $0$. Then $(\K,\V)$ is a topological $\L$-field if $\V$ induces an Hausdorff (non-discrete) topology such that the functions of $\L$ are interpreted by C$^1$-functions on their domains of definition and that each relation and its complement is the union of an open set and an $E$-variety.
So w.l.o.g. we may assume that every quantifier-free $\L$-formula is a finite disjunction of quantifier-free formulas of the form:
\[
\bigwedge_{j\in J} t_j(\x)=0\wedge \x\in O,
\]
where $t_j(\x)$ is an $\L$-term, $j\in J$ ($J$ finite) and $O$ is a definable open set by a conjunction of basic formulas.
We will say that $\K$ is endowed with a definable topology if there is an $\L$-formula $\chi(x,\y)$ such that a basis of neighbourhoods of $0$ in $K$ is given by $\chi(K,\bd)$, where $\bd\in K^n$, $n=\vert \y\vert$. 
Note that if $\K$ is endowed with a definable topology, then any field $\K_{0}$ elementary equivalent to $\K$ can be endowed with a definable topology using the same formula $\chi(x,\y)$. 
Moreover if $\K$ is endowed with a definable topology with corresponding formula $\chi(x,\y)$ and $\tilde \K$ an elementary extension of $\K$ endowed with a topology induced by $\chi$, then $\tilde \K$ is a topological extension of $\K$ \cite[Definition 2.3]{GP}.
As usual, the cartesian products of $K$ are endowed with the product topology. Let $\x$ be a $m$-tuple, we will denote by $\bar \chi(\x,\y)$ the formula $\bigwedge_{i=1}^m \chi(x_{i},\y)$.

\nota \label{Ksmall} Let $(\K,\V)\subset (\tilde \K,\W)$ be two topological $\L$-fields with $(\tilde \K,\W)$ be a topological extension of $(\K,\V)$, as defined in \cite[Definition 2.3]{GP}, namely $\K$ is an $\L$-substructure of $\tilde \K$ and for any $V\in \V$ there exists $W\in \W$ such that $V=W\cap K$. Let $\W_{K}:=\{W\in \W: W\cap K\in \V\}$.
\par On elements  $a, b\in \tilde K$ we have the equivalence relation $a \sim_{\W_{K}} b$ which means that $a-b$ belongs to every element of $\W_{K}$. (We will also use the notation $a\sim_{K}b$.)
\par We will say that a non zero element $a\in\tilde K$ is $K$-small if $a \sim_{\W_{K}} 0$ (that we abbreviate by $a\sim_K 0$). 
\enota 

\par Let $K$ be a topological field. Then a subset $Z\subset K$ is bounded away from $0$ if $0$ does not belong to the closure of $Z$. The topology on $K$ is a $V$-topology if whenever $X, Y\subset K$ are bounded away from $0$, then $X Y$ is bounded away from $0$. One calls such fields $V$-topological fields \cite[Section 3]{PZ}. By results of Kowalsky-D\"urbaum, and Fleischer if $K$ is a $V$-topological field then its topology is either induced by an archimedean absolute value or by a non-trivial valuation \cite[Theorem 3.1]{PZ}. One can define a notion of topological henselianity (t-henselianity) for $V$-topological fields \cite[Theorem 7.2]{PZ}. One can show that one can embed any $V$-topological field in a t-henselian field \cite[Lemma 2.2]{HHJ} and a t-henselian field satisfies the implicit function theorem for polynomial maps \cite[Theorem 7.4]{PZ}, \cite[Fact 2.4]{HHJ}. In the following, we will need an analog of that property for exponential polynomials.

\subsection{Implicit function theorem}\label{sec:imp}
From now on, we will assume that $\K$ is a topological $\L$-field where the topology is a $V$-topology and it is definable with corresponding formula $\chi$. 
\nota \cite[Definition 4.4]{W} \label{neigh} Let $\Se$ be a neighbourhood system in $K^n$, namely a non-empty collection of open non-empty definable  neighbourhoods closed under finite intersection. Let $\ba\in K^n$, we will denote by $\Se_\ba$ the neighbourhood system consisting of all definable neighbourhoods of $\ba$.
\par Denote by ${\Ge^n(\Se)}^-:=\{(f,U)\colon U\in \Se, f: U\to K$ a $C^{\infty}$-function, definable in $\K \}.$ One defines on ${\Ge^n(\Se)}^-$ an equivalence relation $\sim$ as follows: $(f_1,U_1)\sim (f_2,U_2)$ if there is $U\subset U_1\cap U_2$ such that $f_1\restriction U=f_2\restriction U$. Let $\Ge^n(\Se):={\Ge^n(\Se)}^-/\sim$. We denote by $[f,U]$ the equivalence class containing $(f,U)$.
\par Denote by ${{\Ge_{an}^n}(\Se)}^-:=\{(f,U)\colon U\in \Se, f: U\to K$ an analytic function, definable  in $\K\; \}$ and by $\Ge_{an}^n(\Se):={\Ge_{an}^n(\Se)}^-/\sim$.
\enota

\medskip
\par We now introduce the following {\it implicit function theorem} hypothesis $(\IFTA)_{e}$ that we put on the class of fields under consideration. The implicit function theorem for $\text{C}^1$-functions, or 
C$^{\infty}$-functions, or analytic functions is classically proven in fields like $\IR$, $\IQ_p$ (or more generally complete (non-discrete) valued fields (of rank $1$)) \cite[section 1.5]{B}. A. Wilkie stated it for any field $\K$ elementary equivalent to an expansion of the field of reals \cite[section 4.3]{W}, T. Servi recasted the results of Wilkie in definably complete expansions of ordered fields \cite{S}.
\dfn\label{imp}  Let $n=\ell+m$, $n>1$, $\ell, m>0$, let $(\ba,\bb)\in K^{\ell+m}$. 
 Let $f_1(\x,\y),\ldots, f_m(\x,\y)$ be definable $C^{1}$-functions in $\K$, $\vert \x\vert=\ell$, $\vert \y\vert=m,$
denote $\bar f(\ba, \y)=(f_{1}(\ba,\y),\ldots, f_{m}(\ba,\y))$ by $\bar f_{\ba}(\y)$. Then $\K$ satisfies $(\IFTA)$ if the following holds.
Assume that $\bar f_{\ba}(\bb)=0$ and that $det(J_{\bar f_{\ba}}(\bb))\neq 0$ (see Notation \ref{Jac}). 
Then  there are  neighbourhoods $O_{\ba}\subset K^\ell$ of $\ba$ (respectively $O_{b_i}\subset K$, $1\leq i\leq m$, of $b_i$) and C$^1$-functions $g_{i}(\x): O_{\ba} \to O_{b_i}$, $1\leq i\leq m$, such that  setting $\bar g:=(g_1,\ldots, g_m)$ and $\bar \ell:=(1,\ldots,\ell)$,
\begin{align} \label{eq9} & \bar g(\ba)=\bb\;\wedge\\
\label{eq10} &\forall\; \x\in O_{\ba}\;\;\big(\bar f(\x,\bar g(\x))=0\; \wedge J_{\bar g}(\x)=-(\nabla_{m}\bar f(\x,\bar g(\x))^{-1}\nabla_{\bar \ell} \bar f(\x,\bar g(\x)))\big)\;\wedge\\
\label{eq11} & \forall\; \x\in O_{\ba}\;\forall\; \y\in O_{\bb}\;(\bar f(\x,\y)=0 \leftrightarrow \y=\bar g(\x)).
\end{align}
We denote by $(\IFTA)_{e}$ the corresponding scheme when the functions $f_1(\x,\y),\ldots, f_m(\x,\y)$ are those induced by the exponential polynomials (with coefficients in $K$).
\edfn

\nota \label{IFTA} As noted in \cite[4.3]{W}, when the topology on $K$ is definable, this implies that whenever the functions $f_i$ are definable (respectively C$^{\infty}$), the $g_i$'s are definable (respectively C$^{\infty}$), using the above equations (\ref{eq9}), (\ref{eq10}), (\ref{eq11}).
If, when the functions $f_i$, $1\leq i\leq m$, are analytic functions in a neighbourhood of $(\ba,\bb)$, the functions $\bar g$ in the scheme $(\IFTA)$ are also analytic in a neighbourhood of $\ba$, we will denote the corresponding scheme $(\IFTA)_{an}$.
\enota

\nota \cite[below Notation 4.6]{W}
Keeping the same notation as in Notation \ref{neigh} and Definition \ref{imp}, and assuming that $\K$ satisfies $(\IFTA)$, we may define 
a map 
\[\hat{}: \Ge^n(\Se_{(\ba,\bb)})^-\to \Ge^{\ell}(\Se_{\ba})^-: f\mapsto \hat{f}
\]
 sending the function $f\restriction O_{\ba}\times O_{\bb} \to K$, where $O_{\bb}:=O_{b_1}\times\ldots\times O_{b_m}$, to the function $\hat f: O_\ba \to K:\x\mapsto f(\x, g_{1}(\x),\ldots,g_{m}(\x))$.
\par It is convenient to introduce an $(\ell+m)$-tuple $(\overline{ \tilde g})$ of functions defined as follows: $\tilde g_i(\x)=x_i$ for $1\leq i\leq \ell$ and $\tilde g_{\ell+i}:=g_i(\x)$, $1\leq i\leq m$. With this notation $\hat f(\x)=f(\overline{ \tilde g}(\x))$. 
\enota  

\lem\label{hat} Suppose $\K$ satisfies $(\IFTA)_{e}$. Let $(\ba,\bb)\in K^{\ell+m}$ and let $f_1,\ldots,f_m, h\in \Ge^{\ell+m}(\Se_{(\ba,\bb)})$.
 Let $\bar f:=(f_{1},\ldots, f_{m})$ and assume that $\bar f(\ba,\bb)=0\;\&\;det(J_{\bar f_{\ba}}(\bb))\neq 0$. 
Then, keeping the same notations as above, the sequence of vectors $\nabla \bar f(\ba,\bb),\;\nabla h(\ba,\bb)$ is $K\!$-linearly independent iff $\nabla \hat{h}(\ba)\neq 0$.
\elem
\pr The proof is the same as the one of \cite[Lemma 4.7]{W} (and it was also used in \cite{M} (see \cite[Lemma 5.1.3]{M})). \qed

\medskip
\par We will need the following {\it lack of flat functions} ($\LFF$) property 
\cite[Lemma 4.5]{W}, \cite[Lemma 25]{S}. 
\dfn \label{lff} We say that $\K$ satisfies $(\LFF)$ if the following holds. 
\par (1) The map sending an element of $\K[\X]^E$ to the corresponding function in $\K[\x]^E$ (possibly partially defined, is injective (see Notation \ref{inj}),
\par (2) Let $\Se$ be a neighbourhood system in $K^n$ and let $M$ be a subring of $\Ge^n(\Se)$
closed under differentiation. Let $I\subset M$ be a finitely generated ideal closed under differentiation and let $[g_1,U_1],\ldots, [g_s,U_s]$ be generators for $I$. 
Let $Z$ be the set of common zeroes of $g_i$, $1\leq i\leq s$, in $U_1\cap\ldots\cap U_s$. Then there is $U\in \Se$ such that $U\cap Z$ is an open subset of $K^n$.
\edfn
\par In case $K$ is either real-closed or an ordered field which is definably complete, then $(\LFF)$ holds in general. Indeed, in that last case, it follows from the following property of solutions of systems of linear differential equations \cite[Theorem 8]{S}, \cite[Proof of Lemma 4.5]{W}:\\
given a non-empty open interval $U$ of $K$ and a system of linear differential equations with coefficients in the ring $R$ of $C^1$-functions from $U$ to $K$, there is a unique solution of that system in $R$.

\

\par If in $\K$, $(\IFTA)_{an}$ holds and if we restrict $M$ to be a subring of $\Ge_{an}^n(\Se)$, then the corresponding property (2) above holds.

\

Observe that given a finite given number of elements of $K[\x]^E$, we can put them in a noetherian differential subring of $K[\x]^E$. Indeed, using the complexity function $\ord$ defined in $K[\X]^E$, this is always possible to find such a ring. An exponential polynomial corresponds to an $\L_{E}$-term and those are constructed by induction in finitely many steps. So we place ourselves in the ordinary polynomial ring (over $K$) generated by all the (finitely many) sub-terms appearing in the construction and their derivatives. This subring of $K[\x]^E$ is closed under differentiation and noetherian. So in such a ring we may apply property $(\LFF)$.
W.l.o.g.  we may place ourselves in a subring of $K_0[\x]^E$, where $K_0$ is a finitely generated subfield of $K$. 
\par The next result was first observed for $(\bar \IR,exp)$ by A. Wilkie \cite{W} but note that it also holds without the assumption of noetherianity, for definably complete structures by G. Jones and A. Wilkie \cite{JW}. Then it was re-used in \cite[Proposition 5.1.4]{M} in the case of the valued field $(\IQ_{p},E_{p})$; there one applied the version $(\IFTA)_{an}$ of the implicit function theorem for analytic functions. 

\prop \label{max} \cite[Theorem 4.9]{W} Assume that $\K$ satisfies $(\IFTA)_{e}$ and $(\LFF)$. 
\par Let $\ra\in K^n$ and let $R_{n}$ be a noetherian subring of $\Ge^n(\Se_{\ra})$ closed under differentiation.
Let $f_{1},\ldots, f_{m}\in R_{n}$, $1\leq m\leq n$, and assume $\ra\in V_n^{reg}(f_{1},\ldots, f_{m})$. Then, exactly one of the following is true:
\begin{enumerate} 
\item[(a)] $n=m$; or,
\item[(b)] $m<n$ and for all $h\in R_{n}$ with $h(\ra) = 0$, h vanishes on $U\cap V_n^{reg}(f_{1},\ldots, f_{m})$ for some open neighbourhood $U$ containing $\ra$,
\item[(c)] $m<n$ and for some $h\in R_{n}$, $\ra\in V_n^{reg}(f_{1},\ldots, f_{m},h)$.
\end{enumerate}\qed
\eprop
\par Note that later, we will not appeal to the above proposition directly, but instead we will go over the main lines of its proof. 

\
\par We will end the section by showing, that in case $\K$ satisfies $(\IFTA)_{e}$, that we can find $\Ecl$-independent, $K$-small elements in an elementary extension of $\K$, using the following immediate consequence of $(\IFTA)_{e}$.
\begin{remark}\label{zero-iso} 
Let $\K$ satisfying $(\IFTA)_{e}$. Let $f_{1},\ldots, f_{m}\in K[\x,\y]^E$, and $(\ba, \bb)\in K^{\ell+m}$, with $\vert \x\vert=\ell, \vert \y\vert=m$. Let $\ba\in K^{\ell}$, $\bb\in K^m$. Assume that $H_{\bar f_{\ba}}(\bb)$ holds, namely $\bar f(\ba,\bb)=0$ and $det(J_{\bar f_{\ba}}(\bb))\neq 0$ (see Definition \ref{Kho}). Then,
 $\bb$ is an isolated zero of the system $\bar f_{\ba}(\y)=0$. 
 \par Note that if the field $K$ is in addition separable as a topological space, then $\Ecl^K(B)$ is countable whenever $B$ is countable. (Indeed the number of Khovanskii systems with coefficients in $B$ is countable and each such system has at most countably many solutions since each solution is isolated.)
\end{remark}

\begin{lemma}\label{sat-ecl-petit} Let $\K$ satisfy $(\IFTA)_{e}$. Let $\K_1$ be a $\vert K\vert^+$-saturated elementary extension of $\K$. Then  there is an element $t\in K_1\setminus \Ecl^{K_{1}}(K)$ with $t\sim_K 0$. More generally for every $n\in \IN^*$ there are $n$ elements $t_1,\ldots, t_n\in K_1$ $\Ecl$-independent over $K$ and $K$-small.
\end{lemma}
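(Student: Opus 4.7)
The plan is to use the $|K|^+$-saturation of $\K_1$ to realize, by a compactness argument, a partial type over $K$ that expresses simultaneously that $x$ is $K$-small, nonzero, and not $\Ecl$-algebraic over $K$; iteration will then produce $n$ such elements that are $\Ecl$-independent.

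Concretely, for $n=1$ I would form the partial $\L(K)$-type $p(x)$ consisting of (a) the formulas $\bar\chi(x,\bd)$ for each tuple $\bd\in K^{|\y|}$, expressing $x\sim_K 0$; (b) $x\neq 0$; and (c) for every Khovanskii system $H_{\bar f}(y_1,\ldots,y_m)$ with $\bar f\in K[\y]^E$ and every coordinate index $i$, the formula
\[\neg\exists y_1\cdots y_{i-1}y_{i+1}\cdots y_m\;H_{\bar f}(y_1,\ldots,y_{i-1},x,y_{i+1},\ldots,y_m).\]
A realization of $p$ is precisely an element $t\in K_1\setminus\Ecl^{K_1}(K)$ with $t\sim_K 0$. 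Since $|p|\le|K|$, by the saturation of $\K_1$ it suffices to verify finite satisfiability, and by elementarity $\K\prec\K_1$ this reduces to finite satisfiability inside $\K$.

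The finite satisfiability step reads as follows: given $\bd_1,\ldots,\bd_k\in K^{|\y|}$ and Khovanskii systems $H_{\bar f_1},\ldots,H_{\bar f_\ell}$ over $K$, the open set $V:=\bigcap_s\chi(K,\bd_s)$ should contain a nonzero element of $K$ which is not a coordinate of any $K$-solution of any $H_{\bar f_j}$. I would deduce this from the claim that each projection $S_{j,i}:=\{y_i\in K:\exists\bar y_{\hat i}\,H_{\bar f_j}(\bar y)\}$ has empty interior in $K$, since the finite union $\{0\}\cup\bigcup_{j,i}S_{j,i}$ then has empty interior and cannot exhaust the non-empty (by non-discreteness) open set $V$. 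Justifying this empty-interior claim is the main obstacle. The starting point is Remark \ref{zero-iso}: the hypothesis $(\IFTA)_E$ forces each solution of a Khovanskii system to be isolated in $K^{m_j}$. The informal picture is that a non-empty open subset of $S_{j,i}$ would provide a definable family of pairwise distinct isolated solutions parametrized by an open set of $K$, which the local uniqueness granted by the implicit function theorem at a single solution should rule out; turning this into a rigorous argument using only the definability of the topology via $\chi$ and the non-discreteness hypothesis is what needs careful attention.

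For general $n\ge 2$ I would iterate. Having produced $t_1,\ldots,t_k\in K_1$ which are $K$-small and $\Ecl^{K_1}$-independent over $K$, I would form the analogous partial type $p_k(x)$ over $K\cup\{t_1,\ldots,t_k\}$: retain clauses (a)--(b) over $K$, but in (c) range over Khovanskii systems with coefficients in $K\cup\{t_1,\ldots,t_k\}$. This type still has at most $|K|$ formulas, and its finite satisfiability in $\K_1$ follows from the same empty-interior argument carried out inside $\K_1$, which also satisfies $(\IFTA)_E$ and the non-discreteness hypothesis by elementarity. By $|K|^+$-saturation, $p_k$ is realized by some $t_{k+1}\in K_1$ which is $K$-small and lies outside $\Ecl^{K_1}(K\cup\{t_1,\ldots,t_k\})$. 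The exchange property of the pregeometry $\Ecl^{K_1}$ then guarantees that $t_1,\ldots,t_{k+1}$ remain $\Ecl^{K_1}$-independent over $K$, completing the induction.
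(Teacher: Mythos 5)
Your proposal reproduces the paper's proof almost step for step: the paper forms exactly the same partial type (the formulas $\chi(x,\bar a)$ for $\bar a\in K$ together with $\neg\exists\bar y\,H_{\bar f}(x,\bar y)$ for $\bar f\in K[X,\bar Y]^E$), realizes it by $\vert K\vert^+$-saturation after checking finite satisfiability, and then iterates, working over $K_0:=\Ecl^{K_1}(K(t_1,\ldots,t_n))$ to produce $t_{n+1}$; your variant, which takes the type over $K\cup\{t_1,\ldots,t_k\}$ and then invokes exchange, is an equivalent packaging of the induction step. The only cosmetic deviation is that Definition \ref{Kho} places the witnessed element in the first coordinate, so the paper does not need to range over all coordinate positions $i$; permuting the variables of a Khovanskii system shows this changes nothing.

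The step you single out as the main obstacle is precisely the step the paper disposes of in one line ("By Remark \ref{zero-iso}, this set of formulas is finitely satisfiable"), so you have correctly located the crux and the paper's own justification is no more detailed than yours. Two caveats about your reduction, though. First, the inference "each $S_{j,i}$ has empty interior, hence the finite union $\{0\}\cup\bigcup_{j,i}S_{j,i}$ has empty interior" is not valid in a general topological space (consider $\IQ$ and $\IR\setminus\IQ$ in $\IR$); you would need each $S_{j,i}$ to be nowhere dense, or argue directly that the non-empty open set $V$ is not covered. Second, the isolatedness of solutions provided by Remark \ref{zero-iso} does not by itself rule out that infinitely many isolated solutions project onto a set with non-empty interior: your "definable family of pairwise distinct isolated solutions" could come from infinitely many distinct branches, and nothing in $(\IFTA)_E$ alone excludes this. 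In the intended models this is rescued by Khovanskii-type finiteness of the number of non-degenerate solutions (for $\IR$ with $exp$, and Mariaule's analogues for $E_p$), which is consistent with the paper's earlier remark adding the proviso "provided the number of solutions to a Khovanskii system is finite". So the gap you flag is genuine, but it is a gap of the paper's proof as much as of your proposal; apart from the finite-union slip, your write-up is faithful to the intended argument.
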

\begin{proof} Consider the partial type $tp_K(x)$ consisting of $\L(K)$-formulas expressing that $x\sim _{K} 0$ and $x\notin ecl(K)$. The first property is expressed by the set of formulas $\chi(x,\bar a)$, where $\bar a$ varies in $K$ and the second property by $\neg \exists \bar y\;H_{\bar f}(x,\bar y)$ where $\bar f$ varies in $K[X,\bar Y]^E$. 
By Remark \ref{zero-iso}, this set of formulas is finitely satisfiable. 
So $tp_K(x)$ is realized in a $\vert K\vert^+$-saturated extension of $K$ (see for instance \cite[Theorem 4.3.12]{Marker}).

Then by induction on $n$, assume we found $n$ elements $t_1,\ldots, t_n$ $\Ecl$-independent over $K$ and $K$-small. 
Consider the partial type $tp_{K(t_1,\ldots,t_n)}(x)$ consisting of $\L(K(t_1,\ldots,t_n))$-formulas expressing that $x\sim _{K} 0$ and $x\notin \Ecl^{K_1}(K(t_1,\ldots,t_n))$. Again  by Remark \ref{zero-iso}, it is finitely satisfiable and so it is realized in $\K_1$ by an element $t_{n+1}$ such that $t_1,\dots, t_{n+1}$ are $\Ecl$-independent over $K$ and $K$-small.
\end{proof}

\begin{proposition}\label{generegalter} Let $\K$ satisfy $(\IFTA)_{e}$.  Let $\bar f=(f_1,\ldots,f_m)\subseteq K[\X]^E$, $\vert \X\vert=n> m$. Suppose that  there is $\ba\in V_n^{reg}(\bar f)\cap K^n$.

Then there is an elementary $\mathcal L$-extension $\tilde \K$ of  $\K$ and $\bb\in V_n^{reg}(\bar f)\cap \tilde K^n$ with $\bb-\ba\sim_{K}\bar 0$ and $\dim^{\tilde K}(\bb/K)=n-m$.
In particular, $\bb$ is a generic point of $V_n^{reg}(\bar f)\cap \tilde K^n$.
\end{proposition}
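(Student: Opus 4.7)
The plan is to apply the implicit function theorem to parametrize a piece of $V_n^{reg}(\bar f)$ around $\ba$ by the first $n-m$ coordinates, then perturb those coordinates by $K$-small $\Ecl$-independent elements coming from a sufficiently saturated elementary extension.

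First I would reduce to a convenient indexing. Since $\ba\in V_n^{reg}(\bar f)$, the Jacobian $J_{\bar f}(\ba)$ has rank $m$; by permuting the variables (which does not affect anything in the statement) I can assume that $\det(J_{\bar f_{\ba_{[n-m]}}}(\ba_{[m]}))\neq 0$. Then apply $(\IFTA)_E$: writing $\x=(X_1,\ldots,X_{n-m})$ and $\y=(X_{n-m+1},\ldots,X_n)$, there exist a neighbourhood $O$ of $\ba_{[n-m]}$ and $C^1$-functions $g_1,\ldots,g_m$ on $O$ satisfying $g_i(\ba_{[n-m]})=a_{n-m+i}$ and $\bar f(\x,\bar g(\x))\equiv 0$. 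By Notation \ref{IFTA}, because the $f_i$ are $E$-polynomials and the topology is definable, both $O$ and the $g_i$ are $\L_E(K)$-definable; in particular $O$ contains a basic neighbourhood $\bar\chi(\x-\ba_{[n-m]},\bd)$ for some $\bd\in K$.

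Next I invoke Lemma \ref{sat-ecl-petit} in a $\vert K\vert^+$-saturated elementary extension $\tilde \K\succeq \K$ to obtain $n-m$ elements $t_1,\ldots,t_{n-m}\in \tilde K$ that are $K$-small and $\Ecl^{\tilde K}$-independent over $K$. Set $\bb_{[n-m]}:=\ba_{[n-m]}+(t_1,\ldots,t_{n-m})$. Since each $t_i$ lies in every $K$-definable basic neighbourhood of $0$, $\bb_{[n-m]}$ lies in $O^{\tilde K}$, so I may define $\bb_{[m]}:=(g_1(\bb_{[n-m]}),\ldots,g_m(\bb_{[n-m]}))$ and let $\bb$ be the resulting $n$-tuple. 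By elementarity, $\bar f(\bb)=0$ and the Jacobian nondegeneracy $\det(J_{\bar f_{\bb_{[n-m]}}}(\bb_{[m]}))\neq 0$ persists (it is an open condition satisfied at $\ba$ and expressible by an $\L_E(K)$-formula valid on $O$), hence $\bb\in V_n^{reg}(\bar f)$.

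Finally I verify the three required properties. Closeness $\bb-\ba\sim_K\bar 0$ holds on the first $n-m$ coordinates by construction and on the last $m$ coordinates by continuity of the definable functions $g_i$ at $\ba_{[n-m]}$: any basic $K$-neighbourhood of $a_{n-m+i}=g_i(\ba_{[n-m]})$ pulls back under $g_i$ to a $K$-definable neighbourhood of $\ba_{[n-m]}$, which is entered by $\bb_{[n-m]}$. For the dimension, $b_1,\ldots,b_{n-m}$ are $\Ecl^{\tilde K}$-independent over $K$ because the $t_i=b_i-a_i$ are and $\ba\in K^n$; together with the Khovanskii system $\bar f_{\bb_{[n-m]}}(\y)=0\wedge \det(J_{\bar f_{\bb_{[n-m]}}}(\bb_{[m]}))\neq 0$, this gives $\bb_{[m]}\subseteq \Ecl^{\tilde K}(K(\bb_{[n-m]}))$, so $\dim^{\tilde K}(\bb/K)=n-m$. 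By Remark \ref{polmin}(2), this is the maximal possible dimension, so $\bb$ is generic.

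The main technical point I expect to have to argue carefully is the passage to the elementary extension, namely that the neighbourhood $O$ furnished by $(\IFTA)_E$ is $K$-definable and hence its interpretation in $\tilde \K$ contains every $K$-small perturbation of $\ba_{[n-m]}$; everything else is bookkeeping from Lemmas \ref{sat-ecl-petit}, \ref{hat}, and Remark \ref{polmin}.
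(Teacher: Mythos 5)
Your proposal is correct and follows essentially the same route as the paper: permute coordinates to make the last $m\times m$ block of the Jacobian invertible, apply $(\IFTA)_E$ to parametrize the variety near $\ba$ by definable functions of the first $n-m$ coordinates, and perturb those coordinates by the $K$-small, $\Ecl$-independent elements supplied by Lemma \ref{sat-ecl-petit}. You are in fact somewhat more explicit than the paper in checking that regularity persists at $\bb$ and in deriving the dimension and genericity claims, but the underlying argument is the same.
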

\begin{proof}
Let $\ba\in V_{n}^{reg}(\bar f)$, then $\nabla f_{1}(\ba),\ldots,\nabla f_{m}(\ba)$ are linearly independent over $K$.
By permuting the variables $X_{1},\ldots,X_{n}$, assume that $\nabla_{m} f_{1}(\ba),\ldots,\nabla_{m} f_{m}(\ba)$ are $K$-linearly independent (see Notation \ref{gradient}). So we have $det(J_{\bar f_{\ba_{[n-m]}}}(\ba_{[m]}))\neq 0$, with $\ba:=(\ba_{[n-m]},\ba_{[m]})$ (see Notation \ref{subtuple}).  
By  $(\IFTA)_{e}$, there are definable neighbourhoods $O\subseteq K^{n-m}$ of $\ba_{[n-m]}$, $O'\subseteq K^m$ of $\ba_{[m]}$ and definable functions  $g_{1},\ldots, g_{m}$ from $O\to O'$ such that $\ba_{[m]}=g(\ba_{[n-m]})$ and such that for all $\x \in O$, $\bigwedge_{i=1}^m f_{i}(\x, g_{1}(\x),\ldots,g_{m}(\x))=0$.
By  Lemma \ref{sat-ecl-petit}, there is an elementary $\mathcal L_E$-extension $\tilde \K$ of $\K$ containing $n-m$ $K$-small elements 
$t_1,\ldots,t_{n-m}$ which are   $\Ecl$-independent over $K$.
\;\\
Let $\ta_{[n-m]}:=(t_1,\ldots, t_{n-m})$ and $\bb:=\ba_{[n-m]}+\ta_{[n-m]}\in K_{n-m}$. Then $\bb\in \tilde K$ are $\Ecl$-independent over $K$, $\ba-\bb\sim_K 0$ and $\bigwedge_{i=1}^n f_i(\bb, g_1(\bb),\ldots,g_m(\bb))=0$. 
\end{proof}
 
\section{Topological differential exponential fields}

\subsection{Differential fields expansions}\label{sec:ec}
\par Throughout this section, we will place ourselves in the same setting as in subsection \ref{top}; in particular the language $\L$ is a relational expansion of $\L_{E}\cup \{{}^{-1}\}$. 
Again, we assume that the topological $\L$-field $\K$ is endowed with a definable field topology with corresponding formula $\chi$ and that this topology is a $V$-topology. 
\par Let $\L_{\delta}$ be the expansion of $\L$ by an $E$-derivation $\delta$ and given $\K$, let $\K_{\delta}$ denotes the expansion of $\K$ by an $E$-derivation $\delta$. 
\par Given an $\L$-theory $T$ of topological $\L$-fields, we denote 
by  $T_{\delta}$ the theory $T$ together with the axioms of an $E$-derivation (see Definition \ref{E-der}). In particular if $\K\models T$, then $\K_\delta$ is a model of $T_\delta$. 
\par Any $\L_\delta$-term $t(\x)$ with~$\x=(x_1,\ldots,x_n)$ is equivalent, modulo the theory of differential $E$-fields, to an $\L_{\delta}$-term  $t^*(\bar{\delta}^{m_1}(x_1),\ldots,\bar{\delta}^{m_n}(x_n))$ where $t^*$ is an $\L$-term, for some $(m_1,\ldots,m_n)\in \IN^n.$ 
By possibly adding tautological conjunctions like~$\delta^k(x_i)=\delta^k(x_i)$ if needed, we may assume that all the $m_i$'s are equal. We use the following notation $\bar{\delta}^m(\x):=(\x,\delta(\x),\ldots,\delta^m(\x))$, with $\delta^i(\x):=(\delta^i(x_1),\ldots, \delta^i(x_{n}))$, $1\leq i\leq m$. Recall that we associated with any $\L$-term $t^*$ a quantifier-free formula $D_{t^*}$ and its domain of definition.
\par Therefore, we may associate with any quantifier-free $\L_\delta$-formula $\varphi(\x)$ an equivalent $\L_\delta$-formula, modulo the theory of differential $E$-fields, of the form $\varphi^{*,m}(\bar{\delta}^m(\x))$, $m\in \IN$, where $\varphi^{*,m}$ is an $\L$-quantifier-free formula which arises by uniformly replacing every occurrence of~$\delta^m(x_i)$ by a new variable~$x_i^m$ in~$\varphi$ with the following choice for the order of variables $\varphi^{*,m}(\x^0,\ldots,\x^m)$, where $\x^i=(x_1^i,\ldots,x_n^i)$, $0\leq i\leq m$; furthermore since we made the convention that the functions are not everywhere defined, we assume in addition that the formula $\varphi^{*,m}(\x^0,\ldots,\x^m)$ contains for each term $t^*(\x^0,\ldots,\x^m)$ the quantifier-free $\L$-formula $D_t^*(\x^0,\ldots,\x^m)$. Let $\T_\varphi$ be the set of $\L_\delta$-terms occurring in $\varphi$.

\par Furthermore since we are only interested in existentially closed models, we will  add new variables (that we will quantify existentially) and we replace in the formula $\varphi^{*,m}(\x^0,\ldots,\x^m)$, each occurrence of an $\L$-subterm of the form $s^{-1}$ by a new variable $u$ together with the existential formula $\exists u\;u s=1$, in order to transform atomic $\L$-formulas into atomic $\L^-$-formulas in variables $\x^0,\ldots,\x^m, \bar u$. Note that $\delta(u)$ is expressed in terms of $s,\;\delta(s)$.
So we get 
\begin{equation}\label{eq:*}
\varphi(\x) \wedge \bigwedge_{t\in \T_\varphi} D_{t^*}(\bar{\delta}^m(\x)) \leftrightarrow \exists \bar u\,\varphi_-^{*,m}(\bar{\delta}^m(\x),\bar u)\wedge \bigwedge_{t\in \T_\varphi} D_{t^*}(\bar{\delta}^m(\x)),
\end{equation}
where now $\varphi_-^{*,m}$ is a quantifier-free $\L^-$-formula.
We will call the least such $m$, the order of the quantifier-free $\La_{\delta}$-formula $\varphi$. We will call an atomic formula of the form $s(\y)=0$ an $\L^{-}$-equation (or $\L_E$-equation), where $s(\y)$ is an $\L^-$-term.
We will usually drop the superscript $m$ in the formula $\varphi_-^{*,m}$. We will make the following notational simplifications: we will no longer specify that we work on the domains of definitions of our terms.

\subsection{Scheme $(\DL)_{E}$}
Given a model-complete theory $T$ of topological $\L$-fields, we consider the class of existentially closed differential expansions of models of $T$ and under additional assumptions on the class of models of $T$, we will show that this class is elementary and we produce an axiomatisation. Namely, by a scheme of first-order axioms, we will express that certain systems of differential exponential equations have a solution. In order to determine which ones, we first associate, using the process explained above, to a quantifier-free $\L_\delta$-formula $\varphi(\x)$ of order $m$, a quantifier-free $\L^{-}$-formula  $\varphi_-^{*,m}(\bar{\delta}^m(\x),\bar u)$. 
From now on, we will make the additional hypothesis that $\varphi(\x)$ is a finite conjunction of basic formulas (namely either an atomic formula or the negation of an atomic formula), and one can easily check that the associated formula $\varphi_-^{*,m}(\bar{\delta}^m(\x),\bar u)$ is also a finite conjunction of basic formulas.
Since the derivation extends in a unique way to the $\Ecl$-closure, we enumerate partitions of the variables into two subsets: a first one where we impose no conditions and the other one where we express that there are regular solutions of an $E$-variety over this first subset of variables. 
\medskip

\dfn \label{const} Let $\K_{\delta}$ be a differential topological $\L$-field. 
\par We start with a (finite) system $E$-polynomial equations $(\x^{0},\ldots,\x^{m})\in V$ over $K$ where $V$ denotes the corresponding $E$-variety and $\x^{i}:=(x_{1}^i,\ldots, x_{n}^i),$ $1\leq i\leq m$.
\par Let $n\geq \ell_0\geq \ell_{1}\geq\ldots\geq \ell_{m-1}\geq 0$ and set $\x^i_{[\ell_i]}:=(x_{1}^i,\ldots, x_{\ell_i}^i)$, $0\leq i\leq m-1$. Let $\z_{j,i}$ be a tuple of new variables, $\ell_i+1\leq j\leq n$, $0\leq i\leq m-1$ and let $\bar z:=(\z_{(\ell_{i}+1),i},\ldots, \z_{n,i})_{0\leq i\leq m-1}$. Set $\bar \ell:=(\ell_0, \ell_{1},\ldots, \ell_{m-1})$.
\par We then define a Khovanskii formula $H_{\bar \ell}(\x^{0},\ldots,\x^{m-1},\bar z)$, as 
\[
H_{\bar \ell}(\x^{0},\ldots,\x^{m-1},\bar z):= \bigwedge_{i=0}^{m-1} \bigwedge_{j=\ell_i+1}^n H_{\bar f_{j,i}}(x_{j}^i,\z_{j,i}),
\]
where $H_{\bar f_{j,i}}(x_{j}^i,\z_{j,i})$ are Khovanskii systems (see Definition \ref{Kho})
expressing that each element $x_j^i$, $\ell_i+1\leq j\leq n$, of the subtuple $(x_{\ell_i+1}^i,\ldots, x_{n}^i)$ of $\x^i$ is in the $\Ecl^K$-closure of $\x^0_{[\ell_0]},\ldots, \x^i_{[\ell_i]}$, and
$\bar f_{j,i}$ is a tuple of $E$-polynomials 
with coefficients in $K(\x_{[\ell_{i}]}^i,\ldots, \x_{[\ell_{0 }]}^0)$, $0\leq i\leq m-1$.
\par Then a prepared system $(\x^0,\x^1_{[\ell_0]},\ldots, \x^{m}_{[\ell_{m-1}]},\bar z)\in V_H^{prep}$ (with corresponding Khovanskii formula $H_{\bar \ell}$) is 
a conjunction of basic $\L^-(K)$-formulas consisting of:
\begin{enumerate}
\item a Khovanskii formula $H_{\bar \ell}(\x^{0},\ldots,\x^{m-1},\bar z)$ (keeping the above notation),
 \item a conjunction of $E$-polynomial equations $(\x^0,\x^1_{[\ell_0]},\ldots, \x^{m}_{[\ell_{m-1}]}, \bar z)\in V_H$ obtained from the initial system $(\x^{0},\ldots,\x^{m})\in V$ and the Khovanskii systems $H_{\bar f_{j,i}}(x_{j}^i,\z_{j,i})$, by replacing every tuple of variables 
 $(x_{j}^{k+1},\z_{j,k+1})$, $0\leq k \leq m-1$, $\ell_{k}+1\leq j\leq n$, by the tuple $\ta^{1,*}_{\bar f_{j,k+1}}$ of $E$-rational functions (over $K$) in $x_{j}^{k}, \z_{j,k}, \x_{[\ell_k]}^{k},\ldots,\x_{[\ell_{1}]}^1, \x_{[\ell_0]}^0$ introduced in Notation \ref{rat}, obtained by clearing denominators and
 \item a conjunction of negation of atomic formulas saying that the denominators of the terms $\ta_{\bar f_{j,k+1}}^{1,*}$ are non-zero.
\end{enumerate}
\par So a prepared system consists of the conjunction of two conditions of the form: belonging to some $E$-variety and to the complement of such definable set that we will call an $E$-Zariski open set. As usual  we denote the set of solutions in $K$ of the formula $(\x^0,\x^1_{[\ell_0]},\ldots, \x^{m}_{[\ell_{m-1}]},\bar z)\in V_H^{prep}$ by $V_H^{prep}(K)$.
 \par We allow the case where there is no Khovankii formula $H_{\bar \ell}$ and in this case the formula $(\x^{0},\ldots,\x^{m})\in V_{\emptyset}^{prep}$ (with empty tuple $\bar z$) is equal to $(\x^{0},\ldots,\x^{m})\in V$.
\par We let $K_0$ be the subfield of $K$ generated by the elements of $K$ occurring in the formula $(\x^0,\x^1_{[\ell_0]},\ldots, \x^{m}_{[\ell_{m-1}]},\bar z)\in V_H^{prep}$. We will say that $K_{0}$ is associated with $(V, H)$.
\par Let $\bar f$ be a tuple of $E$-polynomials and let $O$ be the ($E$-Zariski) open set such that $V_H^{prep}(K)=V(\bar f)\cap O$. 
Let $N:=n+\sum_{j=0}^{m-2} \ell_j+\vert\bar z\vert$.
\par Then we say that the system $(\x^0,\x^1_{[\ell_0]},\ldots, \x^{m}_{[\ell_{m-1}]},\bar z)\in V_H^{prep}$ is well prepared over $K_0$ if there is a tuple of $E$-polynomials $\bar h(\x^0,\x^1_{[\ell_0]},\ldots, \x^{m}_{[\ell_{m-1}]},\bar z)$ with coefficients in $K_0$ and if there are (non-empty) tuple of indices $\bar i:=(i_{1},\ldots,i_{p})$, $1\leq i_{1}<\ldots<i_{p}\leq \ell_{m-1}$ among the indices of the tuple $\x^{m}_{[\ell_{m-1}]}$ and another tuple of indices $\bar j:=(j_{1},\ldots,j_{r})$ among the indices of the tuple $(\x^0,\x^1_{[\ell_0]},\ldots, \x^{m-1}_{[\ell_{m-2}]},\bar z)$, $1\leq j_{1}<\ldots<j_{r}<N$ such that  $V_{N+\ell_{m-1},(\bar j,\bar i)}^{reg}(\bar h)\cap O=V^{prep}_H(K)$.
\edfn
\par From now on we will not insist on which are the variables occurring in the formula $(\x^0,\x^1_{[\ell_0]},\ldots, \x^{m}_{[\ell_{m-1}]},\bar z)\in V_H^{prep}$ and we will simply denote it by $(\x^0,\x^1,\ldots, \x^{m},\bar z)\in V_H^{prep}$.

\dfn\label{DL} The scheme $(\DL)_E$ has the following form: 
for each (finite) system $E$-polynomial equations $(\x^{0},\ldots,\x^{m})\in V_n$ over $K$ and associated prepared system \\$(\x^0,\x^1,\ldots, \x^{m},\bar z)\in V_H^{prep}$ with Khovanskii formula $H_{\bar \ell}(\x^{0},\ldots,\x^{m-1},\bar z)$, with $\bar \ell:=(\ell_0,\ldots,\ell_{m-1})$, $n\geq \ell_0\geq \ldots\geq \ell_{m-1}$, letting $O$ be an $E$-Zariski open set such that $V^{prep}_H(K)=V_{n}\cap O$, and further assuming it is well-prepared by a tuple of $E$-polynomials $\bar h$ with the property that $V_{N+\ell_{m-1},(\bar j,\bar i)}^{reg}(\bar h)\cap O=V^{prep}_H(K)$ with $N:=n+\sum_{j=0}^{m-2} \ell_j+\vert\bar z\vert$, $\bar i:=(i_{1},\ldots,i_{p})$, $1\leq i_{1}<\ldots<i_{p}\leq \ell_{m-1}$ among the indices of the tuple $\x^{m}_{[\ell_{m-1}]}$ and another tuple of indices $\bar j:=(j_{1},\ldots,j_{r})$ among the indices of the tuple $(\x^0,\x^1_{[\ell_0]},\ldots, \x^{m-1}_{[\ell_{m-2}]},\bar z)$, $1\leq j_{1}<\ldots<j_{r}<N$, we have:
\begin{align*}
\forall \bar d\;\forall \x^0\;\ldots\;\forall \x^m\;&\;\big(\exists \bar z\;\big ((\x^{0},\ldots,\x^{m},\bar z)\in V_{\bar i}^{reg}(h_1,\ldots,h_p))\rightarrow\\
&(\exists \alpha\;\exists \bar w\;\; (\bar\delta^m(\alpha),\bar w)\in V_{\bar i}^{reg}(h_1,\ldots,h_p)\,\wedge\;\chi((\bar\delta^m(\alpha),\bar w)-(\x^0,\ldots,\x^m,\bar z),\bar d))\big).
\end{align*}
\edfn
\par Note that by quantifying over the coefficients of $h_1,\ldots,h_p$, this scheme is first-order.
\rem \label{dense} In a model $\K_\delta\models T_\delta$ of the scheme $(\DL)_E$, the differential points are dense in all cartesian products of $K$. Let $O\subset K^{m+1}$ and $(a_0,\ldots,a_{m})\in O$. Consider the system $x_m=a_m$, then it is well-prepared since taking $h:=x_m-a_m$, $(\partial_{x_0} h,\ldots,\partial_{x_m} h)=(0,\ldots,0,1)$ and so the tuple $(a_0,\ldots,a_{m-1},a_m)\in V^{reg}(h).$
So we find a differential solution $b$ such that $\delta^{m}(b)=a_m$ and $\bar \delta^{m-1}(b)$ is close to $(a_{0},\ldots,a_{m-1})$. This is analogous to \cite[Lemma 3.12]{GP}.
\par The same argument shows that the subfield of constants $C_{K}$ is dense in $K$ (and recall that since $\delta$ is an $E$-derivation, $C_{K}$ is a partial $E$-subfield of $K$ which is relatively algebraically closed in $K$). We even have that $\Ecl^K(C_K)=C_K$ by Lemmas \ref{der-multi}, \ref{der}.
\erem
\medskip
\par Recall that we always work with topological fields endowed with a definable $V$-topology with corresponding formula $\chi$. The main result of this section is:
\thm \label{ec}
Let $T$ be a model-complete theory of topological $\L$-fields.
Assume that $\K\models T$ and that the differential expansion $\K_{\delta}$ is a model of $T_\delta\cup(DL)_{E}$. Then $\K_{\delta}$ is existentially closed in the class of  models of $T_{\delta}$. In particular if the theory $T_\delta\cup (DL)_E$ is consistent, then it is model-complete.
Assume that the models of $T$ satisfy either the schemes $(\IFTA)_{e}$ and $(\LFF)$ or satisfy $(\IFTA)_{an}$, then the theory $T_\delta\cup (DL)_E$ is consistent and so it is the model-companion of the theory $T_\delta$.
\ethm
\par The above theorem will follow from Theorems \ref{emb_delta} and \ref{DLE}.
\par The strategy of the proof is the following. First show that a model $\K_\delta\models T_\delta$ 
satisfying $(\IFTA)_{e}$ and $(\LFF)$ can be embedded in $\tilde K_\delta\models T_\delta\cup (\DL)_{E}$ (Theorem \ref{emb_delta}). 
Second show that if $T_{\delta}\cup (\DL)_{E}$ is consistent, then it axiomatizes the existentially closed models of $T_\delta$ (Theorem \ref{DLE}). 

\

We begin by realizing one instance of the scheme $(\DL)_E$ in a differential extension of $\K_\delta$. 
\medskip
\begin{lemma}\label{iter1} Let $\K_\delta \models T_\delta$ and  
suppose $\K$ satisfies $(\IFTA)_{e}$. Let $\M$ be a $\vert K\vert^+$-saturated elementary $\L$-extension of $\K$.
Consider a (finite) system $E$-polynomial equations $(\x^{0},\ldots,\x^{m})\in V_n$ over $K$, a Khovanskii formula $H_{\bar \ell}(\x^{0},\ldots,\x^{m-1},\bar z)$ with $\bar \ell=(\ell_0,\ldots,\ell_{m-1})$, $n\geq \ell_0\geq \ldots\geq \ell_{m-1}$, and the associated prepared system $(\x^0,\x^1,\ldots, \x^{m},\bar z)\in V_H^{prep}$.
Assume it is well-prepared by $\bar h:=(h_1,\ldots,h_p)$, $1\leq p\leq \ell_{m-1}$, over a finitely generated subfield $K_0$ of $K$. Fix $\bar d\in K$.
Assume that for some $\ba:=(\ba^0\;\ldots\;, \ba^m)\in K$, $\vert \ba^i\vert=n$, $0\leq i\leq m$, we have for some $\bar u\in K$, $\vert \bar u\vert=\vert \bar z\vert$, that
\[
 \K\models (\ba^{0},\ldots, \ba^{m},\bar u)\in V_{\bar i}^{reg}(h_1,\ldots,h_p).
 \]
\par Then, we can find a tuple of elements $\bar \gamma\in M$ and we can extend $\delta$ on $\Ecl^{M}(K,\bar \gamma)$ (and then to $M$) such that for some $(\bar \delta^{m}(\alpha),\overline{\tilde u})\in \Ecl^{M}(K,\bar \gamma)$, we have:
\[
\Ecl^{M}(K,\bar \gamma)\models (\bar \delta^{m}(\alpha),\overline{\tilde u})\in V_{\bar i}^{reg}(h_1,\ldots,h_p)\;\wedge\;\bar \chi((\bar \delta^{m}(\alpha),\overline{\tilde u})-(\ba, \bar u),\bar d).
\]
\end{lemma}
\begin{proof} 
First let us observe that the saturation hypothesis on $M$ will only be used in order to find $K$-small elements which are  $\Ecl$-independent over $K$.
\par For ease of notation, suppose first that $m=1$. Suppose the Khovanskii formula $H_{\bar \ell}$ is of the form $\bigwedge_{i=1}^{n-\ell} H_i(\x_{\ell+i}^0,\bold z_{\ell+i})$, $0<\ell<n$.
Let $\ba:=(\ba^0,\ba^1)$ and let $\bar u:=(\bu_{\ell+1},\ldots,\bu_{n})\in K$ be such that 
\[
\K\models \bigwedge_{i=1}^{n-\ell} H_i(a_{\ell+i}^0,\bu_{\ell+i}).
\]
Let $n_i:=\vert \bu_{\ell+i}\vert$, $1\leq i\leq n-\ell$ and $N$ be the length of $(\ba^0, \bar u)$.
\par We have assumed that $V_H^{prep}$ is well-prepared over $K_0$ and so there is a tuple of $E$-polynomials $\bar h:=(h_1(\x^0,\bar z, \x^1_{[\ell]}),\ldots, h_k(\x^0,\bar z,\x^1_{[\ell]}))$ with $\nabla h_{1}(\ba^0,\bar u,\ba^1_{[\ell]}),\ldots, \nabla h_{k}(\ba^0,\bar u,\ba^1_\ell)$ are $K$-linearly independent, with $k<N+\ell$. 
Furthermore, for some tuple of increasing indices $\bar i:=(i_{1},\ldots,i_{p})$, $1\leq i_{1}<\ldots<i_{p}\leq \ell$, we decompose $\ba^1_{[\ell]}$ accordingly into two subtuples: $\ba^1_{[p]}$ and $\ba^1_{[\ell-p]}$ (see Notation \ref{subtuple}), allowing if $p=\ell$, $\ba^1_{[\ell-p]}$ to be an empty tuple, and we get that the determinant of the matrix $(\nabla_{\bar i}h_{1\,(\ba^0,\bar u, \ba^1_{[p-\ell]})}(\ba^1_{[p]}),\ldots,\nabla_{\bar i}h_{p\,(\ba^0,\bar u, \ba^1_{[p-\ell]})}(\ba^1_{[p]}))$ is nonzero.
Assume that $(\ba^0,\bar u,\ba^1_{[\ell]})\in V_{\bar i}^{reg}(h_1,\ldots,h_p)$. 
\par Decompose the tuple of variables $(v_1,\ldots, v_\ell)$ as $(v_{[\ell-p]}, v_{i_1},\ldots,v_{i_p})$ and set $\bar y=(\x^0,\bar z,v_{[\ell-p]})$.
 Then we apply directly hypothesis $(\IFTA)_{e}$. There exist $O_{1}$ a definable neighbourhood of $(\ba^0, \bar u, \ba^1_{[p-\ell]})$ and $O_{2}$ a definable neighbourhood of $\ba_{[p]}^1$ and definable $C^\infty$ functions $g_{i}$ from $O_{1}$ to $O_{2}$, $1\leq i\leq p$, such that 
\[ \bigwedge_{i=1}^p g_{i}(\ba^0,\bar u,\ba^1_{[\ell-p]})=a_{i}^1 \;\;\wedge\;\;\forall\; \bar y\in O_{1}\;\;
(\bigwedge_{i=1}^p h_{i}(\bar y, g_{1}(\bar y),\ldots,g_{p}(\bar y))=0).
\]

\par Recall that we put the product topology on $M^{\tilde N}$ with $\tilde N=N+(\ell-p)$ (the length of $(\ba^0,\bar u,\ba^1_{[\ell-p]})$). Let $\pi$ be the projection sending a tuple $(\ba^0,\bu,\ba^1_{[p-\ell]})\in M^{\tilde N}$ to the subtuple $\ba_{[\ell]}^0\in M^{\ell}$ and $\pi_i$ the projection sending $(\ba^0,\bu,\ba^1_{[\ell-p]})$ to the subtuple $(a_{\ell+i}^0, \bu_{\ell+i})\in M^{n_i+1}$, $1\leq i\leq n-\ell$.
 
Let $(a_{\ell+i}^0, \bu_{\ell+i})$ be regular zeroes of  each system $H_i(x_{\ell+i}^0,\z_{\ell+i})$, $1\leq i\leq n-\ell$, over $\IQ(\bar c, \ba_{[\ell]}^0)$, where $\bar c$ is the tuple of coefficients (in $K$) of the $E$-polynomials occurring in $H_i$. For each $1\leq i\leq n-\ell$, we apply $(\IFTA)_{e}$ in $\M$ and find a neighbourhood $O_{1,1}$ of $\ba_{[\ell]}^0$ with $O_{1,1}\subset \pi(O_{1})$ and a neighbourhood $O_{1,\ell+i}$ of $(a_{\ell+i}^0, \bu_{\ell+i})$ with $O_{1,\ell+i}\subset \pi_{i}(O_{1})$ and definable functions $f_{i,1}, \ldots, f_{i,n_{i}}$ from $O_{1,1}$ to $O_{1,\ell+i}$ such that 
\begin{equation}\label{Kh}
\bigwedge_{i=1}^{n-\ell} f_{i,1}(\ba_{[\ell]}^0)=a_{\ell+i}^0 \wedge \bigwedge_{j=1}^{n_{i}} f_{i,j}(\ba_{[\ell]}^0)=u_{\ell+i,j}\wedge
\forall\; \bar y\in O_{1,1}\;
(\bigwedge_{i=1}^{n-\ell} H_{i}(f_{i,1}(\bar y),\ldots,f_{i,n_{i}}(\bar y))).
\end{equation}
Let $\bar f_{i}:=(f_{i,1}(\bar w),\ldots,f_{i,n_{i}}(\bar w))$ with $\bar w=(w_1,\ldots,w_\ell)$.  Applying $\bar f_{i}$ to $(\ba_{[\ell]}^0+\ta_{[\ell]})$,
we get a solution to each system $H_i(x_{\ell+i}^0,\z_{\ell+i})$, close to $(a_{\ell+i}^0,\bu_{\ell+i})$, $1\leq i\leq n-\ell$.
Denote this solution by $(a_{\ell+i}', \bu_{\ell+i}')$, $1\leq i\leq n-\ell$. 
Let 
\[(\widetilde{\ba},\widetilde{\bu}):= (\ba_{[\ell]}^0+\ta_{[\ell]}, \ba^1_{[\ell-p]}, a_{\ell+1}',\ldots, a_{n}', \bu_{\ell+1}',\ldots,\bu'_{n}).
\] 
Since $(\widetilde{\ba},\widetilde{\bu})$ belongs to $O_{1}(M)$, we may apply the functions $g_{1},\ldots,g_{p}$ in order to obtain $(g_{1}(\widetilde{\ba},\widetilde{\bu}),\ldots,g_{p}(\widetilde{\ba},\widetilde{\bu}))\in V_{p}(\bar h_{\widetilde {\ba},\widetilde{\bu}})$. Set $(\tilde b_1,\ldots,\tilde b_\ell):=(g_{1}(\widetilde{\ba},\widetilde{\bu}),\ldots,g_{p}(\widetilde{\ba},\widetilde{\bu}), \ba^1_{[\ell-p]})$.

\par Since now $a_1^0+t_1,\ldots,a_\ell^0+t_\ell$ are $\Ecl^M$-independent over $K$, we may define 
\begin{equation}\label{star}
\delta(a_1^0+t_1):=\tilde b_1,\ldots,\delta(a_\ell^0+t_\ell):=\tilde b_\ell.
\end{equation}
 Note that the values of the successive derivatives of $\tilde b_{1},\ldots, \tilde b_{\ell}$ are determined since the last part of the tuple, namely $\ba^1_{[\ell-p]}$, is in $K$ and we can express $\delta(\tilde b_{1}),\ldots, \delta(\tilde b_{p})$ using that $(\tilde b_{1},\ldots, \tilde b_{p})$ is a regular zero of $V_{p}(\bar h_{\widetilde {\ba},\widetilde{\bu}})$. Note that $\tilde b_{1},\ldots, \tilde b_{\ell}\in \Ecl^{M}(K,\ba_{[\ell]}^0+\ta_{[\ell]}).$
By equation (\ref{Kh}), $a_{\ell+1}',\ldots, a_{n}'\in \Ecl^M(\bar c, \ba_{[\ell]}^0+\ta_{[\ell]})$, we can also express their derivatives in terms of $\ba_{[\ell]}^0+\ta_{[\ell]}, a_{\ell+1}',\ldots, a_{n}'$, the witnesses $\bu_{\ell+1}',\ldots,\bu_{n}'$ and the derivatives of $\ba_{[\ell]}^0+\ta_{[\ell]}$, namely $\tilde b_{1},\ldots, \tilde b_{\ell}$.
So first we extend $\delta$ on $\Ecl^{M}(K,\ba_{[\ell]}^0+\ta_{[\ell]})$ sending the tuple $\ba_{[\ell]}^0+\ta_{[\ell]}$ to  $(\tilde b_1,\ldots,\tilde b_\ell)$ and then by Corollary \ref{der-ext-gener} to $M$. This extension is uniquely determined on the subfield of $M$ generated by $K$, $\ba_{[\ell]}^0+\ta_{[\ell]},a_{\ell+1}',\ldots, a_{n}', \bu_{\ell+1}',\ldots,\bu_{n}'$ and $\tilde b_1,\ldots,\tilde b_\ell$ (see Lemma \ref{der}).
\par Assume now that $m>1$. Then we proceed as before, replacing in the above discussion $\ba_{[\ell]}^1$ by $\ba_{[\ell_{m-1}]}^m$ and $\ba_{\ell}^0$ by $\ba_{\ell_0}^0, \ba_{\ell_1}^1, \ldots, \ba_{\ell_{m-1}}^{m-1}$,
\end{proof}

\thm \label{emb_delta}
Let $\K\models T$ and
suppose $\K$ satisfies $(\IFTA)_{e}$.
Then the differential expansion $\K_{\delta}$ can be embedded in a model $\tilde \K_{\delta}$ of $T_\delta\cup (\DL)_{E}$. 
\ethm
\pr We adapt \cite[Lemma 3.7]{GP} and \cite[Proposition 3.9]{GP} to this exponential setting. 
The differential extension $\tilde \K_{\delta}$ will be built as the union of a chain of differential extensions of $\K_{\delta}$ which will be in addition $\L$-elementary extensions of $\K$. In particular, we get that $\tilde \K$ is an $\L$-elementary extension of $\K$. We first construct such extension $\tilde \K_\delta$ where all the instances of the scheme $(\DL)_E$ with coefficients in $K$ are satisfied using transfinite induction and then we repeat the construction replacing in the previous argument $\K_\delta$ by $\tilde \K_\delta$ and we do it $\omega$ times. The union of this chain of elementary extensions will be a model of the scheme $(\DL)_E$ and an elementary extension of $\K$.

It suffices to show that given an instance of the scheme $(\DL)_E$, we can find an $\L_\delta$-elementary extension $\K_1$ of $\K_{\delta}$.
This was achieved in  Lemma \ref{iter1}.
\qed
\medskip
\par Recall that $\L$ is a first-order language satisfying the assumptions of section \ref{top}.
\par Now let us show that we can always associate with a quantifier-free $\L_\delta(K)$-formula, a prepared system.
\dfn \label{const2} Let $\K_{\delta}$ be a differential topological $\L$-field.
Let $\varphi(\x)$ be a finite conjunction of basic $\L_\delta(K)$-formulas of order $m$ with $\x:=(x_1,\ldots, x_n)$. We associated with $\varphi(\x)$ a $\L^-(K)$-formula $\varphi_-^{*,m}(\bar \delta^m(\x),\bar u)$ which is of the form $(\bar \delta^m(\x),\bar u)\in V\cap \tilde O$, where $V$ is an $E$-variety and $\tilde O$ a definable open subset (see equation \ref{eq:*}). 
\par Denote by $\x^{i}:=(x_{1}^i,\ldots, x_{n}^i),$ $0\leq i\leq m$ with $\x^{0}=\x=(x_{1},\ldots, x_{n})$. Since the extra-variables $\bar u$ that we added in order to only consider $\L_{-}$-terms, are in the $\L_E$-definable closure of $\x^0$ (and in particular in the $\Ecl^K$-closure) of $\x^0,\ldots, \x^m$, we will assume that they occur within $\x^0$.
\par We associated with $(\x^0,\ldots,\x^m)\in V$ a Khovanskii formula $H_{\bar \ell}(\x^{0},\ldots,\x^{m-1},\bar z)$ with extra variables $\bar z$ and tuple of indices $n\geq \ell_0\geq \ell_{1}\geq\ldots\geq \ell_{m-1}\geq 0$. 
We constructed a formula $(\x^0,\ldots,\x^m,\bar z)\in V_H^{prep}$ such that $V_H^{prep}(K)$ is of the form $V(\bar f)\cap O$ for some tuple of $E$-polynomials $\bar f$ an definable open set $O$ (see Definition \ref{const}). 
\par Set $\varphi^*_H(\x^0,\ldots,\x^m,\bar z)$ to be the formula $(\x^0,\ldots,\x^m,\bar z)\in V_H^{prep}\cap O\cap \tilde O.$ From now on rename $O$ the open set $O\cap \tilde O$.
\edfn

\par Let $\bar c$ be new constant symbols that will be interpreted by the parameters coming from $K$ in the Khovanskii formula.

Note that in case we do have a non-trivial relation between the $\x^i$, $0\leq i\leq m-1$, with coefficients in $\IQ(\bar c)$, they cannot be all $\Ecl$-independent over $\IQ(\bar c)$ by Corollary \ref{norel}.

\

In the next lemma we will show that we may always assume that a prepared system is well-prepared, whenever we look for solutions of that system outside the $\Ecl$-closure of the subfield generated by the coefficients of the $E$-polynomials occurring in the system.
\lem \label{wellprep} Let $\K$ satisfy $(\IFTA)_{e}$ together with $(\LFF)$. Suppose $\K_\delta$ satisfies the scheme $(\DL)_E$. Let $V(\bar f)$ be an $E$-variety over $K$, $H$ an Khovankii formula and let  $V_H^{prep}$ be the corresponding prepared system. Assume that $V_H^{prep}(K)=V(\bar f)\cap O$, where $O$ is an open set. Let $K_0$ be a finitely generated subfield of $K$ associated with $(V, H)$. 
\par Suppose $(\ba^0, \ba^1_{[\ell_0]},\ldots, \ba^m_{[\ell_{m-1}]},  \bar u)\in V_H^{prep}(K)\setminus \Ecl^{K}(K_0)$
and that $\bar f_{\ba^0,\ba^1_{[\ell_0]},\dots,\ba^{m-1}_{[\ell_{m-2}]},\bar u}[\X^m_{[\ell_{m-1}]}]\in {(K[\X^m_{[\ell_{m-1}]}]^E)}^{\vert \bar f\vert}\setminus\{0\}$.
Then there is a differential point in $V(\bar f)$ close to $(\ba^0,\ldots,\ba^m)$.
\elem
\pr We will do the proof in the case $m=1$, for the general case replace $\ba^1_{[\ell_0]}$ by $\ba^m_{[\ell_{m-1}]}$. Set $\ell_0=\ell$.
\par We show that we can find a tuple of $E$-polynomials $\bar h$ and a tuple $(\s_1,\s_2,\ra)$ such that the system is well-prepared and there is a neighbourhood of $(\s_1,\s_2,\ra)$ included in $V_H^{prep}(K)$.  
Let $\bar f_{\x^0,\bar z,\x^1}$ be a tuple of $E$-polynomials $\bar f_{\x^0,\bar z,\x^1}$ such that 
$$V_H^{prep}:=\{\x^1_{[\ell]}\in V(\bar f_{\x^0,\bar z})\;\&\;\bigwedge_{i=1}^{n-\ell} H_i(x_{1,\ell+i},\bar z_{\ell+i})\}.$$
\par Let $\s'_1:=(s'_{1,1},\ldots,s'_{1,n})$, $\s'_2:=(\s'_{2,\ell+1},\ldots, \s'_{2,n})$ with $\vert \s'_{2,\ell+i}\vert=\vert \bar z_{\ell+i}\vert$, $1\leq i\leq n-\ell$, $N=\vert \x^0\vert+ \vert \bar z\vert$, let
\begin{align*}
S_{(\ba^0, \bar u, \ba^1_{[\ell]})}&:=\{(\s'_1, \s'_2, \ra')\in K^{N+\ell}: \vert \s'_1\vert=\vert \x^0\vert, \vert \s'_2\vert= \vert \bar z\vert, \vert \ra'\vert=\ell\;\& \\
&\ra'\in V_{\ell}(\bar f_{\s'_1,\s'_2})\;\&\;\bigwedge_{i=1}^{n-\ell} H_i(s'_{1,\ell+i},\s'_{2,\ell+i})\;\&\;(\s'_1,\s'_2,\ra')\notin \Ecl(K_0)\}\cap (\ba^0, \bar u, \ba^1_{[\ell]})+\bar\chi(K,\bar d).
\end{align*}
The set $S_{(\ba^0, \bar u, \ba^1_{[\ell]})}$ is non-empty since it contains $(\ba^0, \bar u, \ba^1_{[\ell]})$. 
Let $R_{N+\ell}$ be a noetherian subring of $K_0[\x^0, \bar z, \x^1_{[\ell]}]^E$ 
closed under differentiation and containing $\bar f(\x^0,\bar z, \x^1_{[\ell]})$ associated to the corresponding the $E$-polynomials.
Let $M$ be the ring of germs in $\Ge^{N+\ell}(\Se_{(\ba^0, \bar u, \ba^1_{[\ell]})})$ induced by the elements of $R_{N+\ell}$.
Denote by $R_{N+\ell}^{<\omega}$ the set of all finite tuples of elements of $R_{N+\ell}$.
\par Let $Ann:=\{(\s'_1, \s'_2, \ra')\in S_{(\ba^0, \bar u, \ba^1_{[\ell]})}\colon$ for some 
$\bar q\in R_{N+\ell}^{<\omega},\;  (\s'_1, \s'_2, \ra')\in V(\bar q)\}$.
\par By assumption the tuple $((\ba^0,\bar u,\ba_{[\ell]}^1),\bar f)$ belongs to $Ann$.
\par Suppose that we have $((\s_{1,n}, \s_{2,n}, \ra_{n}),\bar q_n)\in Ann$, $n\in \IN^*$, with the ideals $\langle \bar q_n\rangle$ forming an increasing chain. By noetherianity of $R_{N+\ell}$,
we can assume such a chain is finite and there is $m_0$ such that for all $m\geq m_0$, $\langle \bar q_{m_0}\rangle=\langle \bar q_m\rangle$, for all $m\geq m_0$.
\par So we may choose among $(\s'_1, \s'_2, \ra')\in S_{(\ba^0, \bar u, \ba^1_{[\ell]})}$, those such that there is $\bar q\in R_{N+\ell}^{<\omega}$ such that $((\s'_1, \s'_2, \ra'),\bar q)\in Ann$ and $\langle \bar q\rangle$ maximal in $R_{N+\ell}$ with, letting $\bar q=(q_1,\ldots,q_k)$: 
\par $\bullet$ first choosing those such that the tuple of indices $\bar i:=(i_{1},\ldots,i_{p})$ with
 $1\leq i_{1}<\ldots<i_{p}\leq \ell$  such that the determinant of the matrix $(\nabla_{\bar i}q_{1\,(\s_1,\s_2)}(\ra),\ldots,\nabla_{\bar i}q_{p\,(\s_1,\s_2)}(\ra))$ is nonzero, is of maximal length, 
\par $\bullet$ second  choosing, among the first ones,  those such that $\nabla q_{1}(\s'_1,\s'_2,\ra'),\ldots, \nabla q_{k}(\s'_1,\s'_2,\ra')$ are $K$-linearly independent with $k$ maximal.
\par Note that by definition of $S_{(\ba^0, \bar u, \ba^1_{[\ell]})}$, we have that $k<N+\ell$. 
\par Note also that the first condition can always be satisfied.  By assumption on $(\ba^0, \bar u, \ba^1_{[\ell]})$, we have that $\bar f_{\ba^0,\ba^1_{[\ell]},\bar u}[\x^1_{[\ell]}]\in {(K[\x^1_{[\ell]}]^E)}^{\vert \bar f\vert}\setminus\{0\}$. So, consider the ideal $J:=\{q\in R_{N+\ell}\colon q_{(s_1,s_2)}(r)=0\}$. If $J$ is closed under all partial derivatives $\partial_{x_i}$, then by Fact \ref{dpnul}, we get a contradiction since $J$ cannot be equal to $K[\x^1_{[\ell]}]^E$.

\

\par Let $\bar j:=(j_{1},\ldots,j_{k-p})$ be a tuple of increasing indices among $\{1,\ldots,N\}$ such that $(\partial_{y_{j_1}} q_{p+1,\ra'}(\s'_1,\s'_2),\ldots, \partial_{y_{j_{k-p}}} q_{k,\ra'}(\s'_1,\s'_2))$ are $K$-linearly independent.
(So the determinant of the matrix $(\nabla_{\bar j} q_{p+1\,\ra'}(\s_1',\s_2'),\ldots,\nabla_{\bar j}q_{k\,\ra'}(\s_1',\s_2'))$ is nonzero.)
\par Denote by $((\s_1,\s_2,\ra),\bar h)$ such a maximal tuple, with $\bar h:=(h_1(\x^0,\bar z, \x^1_{[\ell]}),\ldots, h_k(\x^0,\bar z,\x^1_{[\ell]}))\in R_{N+\ell}^{<\omega}$.
\par We consider the map $\Lambda: (v_1,\ldots,v_N,v_{N+1},\ldots,v_{N+\ell})\mapsto det(\nabla_{\bar i} h_{1},\ldots, \nabla_{\bar i} h_{p},\nabla_{\bar j} h_{p+1},\ldots,\nabla_{\bar j} h_k)$. 
By construction $\Lambda(\s_1,\s_2,\ra)\neq 0$ so it doesn't vanish on a neighbourhood of
$(\s_1,\s_2,\ra)$. So, there is a neighbourhood  $U_0$ of $(\s_1,\s_2,\ra)$ where $\Lambda\restriction U_0$ is invertible. 
\par We will use the notation $(\s_1,\s_2,\ra)_{[N+\ell-k]}$ for the subtuple of $(\s_1,\s_2,\ra)$ obtained when only considering the indices in $\{1,\ldots,N+\ell\}$ which do not occur neither in $\bar i$ nor in $\bar j$.
\par We have a map $\;\hat{}: \Ge^{N+\ell}(\Se_{(\s_1,\s_2,\ra)})\to \Ge^{N+\ell-k}(\Se_{(\s_1,\s_2,\ra)_{[N+\ell-k]}})$ (applying $(\IFTA)_{e}$ to $(h_1,\ldots, h_k)$. 
Consider $R_{N+\ell}[\Lambda^{-1}]$ and let $M\subset \Ge^{N+\ell-k}(\Se_{(\s_1,\s_2,\ra)_{[N+\ell-k]}})$ be its image by the map $\hat{}$. Let $I:=\{g\in M\colon g((\s_1,\s_2,\ra)_{[N+\ell-k]})=0\}$ be an ideal in $M$. 
\par If $I=\{0\}$, then for any $f_j$ occurring in the tuple $\bar f$, since $\hat{f_j}\in I$, $f_j$ vanishes in a neighbourhood $W_j$ of $(\s_1,\s_2,\ra)$. 
So it suffices to apply the scheme $(\DL)_E$ to the pair ($\bar h_{\bar i},W)$ where $W=\bigcap W_j$, in order to find in $V(K)\cap W$ a differential tuple.
\par If $I\neq \{0\}$, let us show that we contradict the choice of $h_1,\ldots,h_k$. By $(\LFF)$, $I$ is not closed under differentiation. So there is $h\in I$ with $\nabla \hat{h}(\s_1,\s_2,\ra)_{[N+\ell-k]}\neq 0$. Since $h\in I$, there is some $s\geq 1$ such that $\Lambda^s h\in R_{N+\ell}$. Let $\tilde h:=\Lambda^s h$. Using the same argument as in \cite[Theorem 4.9]{W}, we get that $\nabla \hat{\tilde h}(\s_1,\s_2,\ra)_{[N+\ell-k]}\neq 0$ and so we can add $\tilde h$ to $h_1,\ldots,h_k$ contradicting maximality (and the fact that $(\s_1,\s_2,\ra)\notin \Ecl^{K}(K_0))$. (Indeed using Lemma \ref{hat}, we have that $\nabla h_{1}(\s_1,\s_2,\ra),\ldots, \nabla h_{k}(\s_1,\s_2,\ra),  \nabla \tilde h(\s_1,\s_2,\ra)$ are $K$-linearly independent.)
\qed

\thm \label{DLE} Let $T$ be a model-complete theory of topological $\L$-fields.
Assume that $\K\models T$ and that $\K$ satisfy $(\IFTA)_{e}$ and $(\LFF)$. Assume that the differential expansion $\K_{\delta}$ is a model of $T_\delta\cup(\DL)_{E}$. Then $\K_{\delta}$ is existentially closed in the class of  models of $T_{\delta}$. In particular if the theory $T_\delta\cup (\DL)_E$ is consistent, then it is model-complete.
\ethm 
\pr Let $\K_{\delta}\models T_{\delta}\cup (\DL)_{E}$ and suppose that $\K_{\delta}\subset \tilde \K_{\delta}$ with $\tilde \K_{\delta}\models T_\delta$. W.l.o.g. we may assume that $K$ is $\aleph_1$-saturated.
\par Let $\x=(x_1,\ldots,x_n)$ and $\xi(\x)$ be a quantifier-free $\L_{\delta}(K)$-formula of order $m$ and assume that for some tuple $\ba\in \tilde K_\delta$, $\tilde\K_\delta\models \xi(\ba)$. Since $T$ is model-complete and $\K_\delta\models T$, we may assume that we are in the case where $m\geq 1$.
We may assume that $\xi(\x)$ is of the form $\varphi(\x)\wedge\;(\bar \delta^m(\x)\in O)$, where $\varphi(\x)$ is a conjunction of $\L_\delta(K)$-equations and $O$ is an $\L(K)$-definable open subset of some cartesian product of $\tilde K$. Furthermore using equation \ref{eq:*} (and since we are quantifying existentially), we may assume $\xi(\x)$ is equivalent to a formula of the form $\varphi^{*,m}_-(\bar \delta^m(\x))\,\wedge \bar \delta^m(\x)\in O$, where $\varphi^{*,m}_-$ is a conjunction of $\L^-(K)$-equations (with possibly a smaller definable open set $O$). Denote by $V$ the $E$-variety corresponding to $\varphi^{*,m}_-(K)$.
\par Let  $K_0$ be the finitely generated subfield of coefficients occurring in the formula $\varphi^{*,m}_-$.
\par First note that if $a_1,\ldots, a_n \in \Ecl^{\tilde K}(K)$, then their successive derivatives can be expressed in terms of $a_i, \bar u_i$ and some tuples of elements of $K$, $1\leq i\leq n$ (see Notation \ref{rat}). By existentially quantifying over the elements of $\tilde K\setminus K$, we can transform the $\L_{\delta}(K)$-formula $\varphi$ into an existential $\L(K)$-formula and use the fact that $T$ is model-complete.
\par Set $\ba^i:=\delta^i(\ba)$, $0\leq i\leq m$. If all $\ba^i$, $0\leq i\leq m$, are $\Ecl$-independent over $K$, then by Corollary \ref{norel}, there are no non trivial $E$-polynomial vanishing on $(\ba^0,\ldots,\ba^m)$, so we may assume that the formula is of the form $\bar \delta^m(\x)\in O$. In that case we may conclude using the density of differential points (see Remark \ref{dense}). 
\par So from now on, let us assume this is not the case.
\par If all $\ba^i$, $0\leq i\leq m-1$, are $\Ecl$-independent over $K$, then the tuple $(\ba^i, 0\leq i\leq m-1)$, is a fortiori $\Ecl$-independent over $K_{0}$.  We first construct prepared system (with no Khovanskii formula) with a solution in $K\setminus \Ecl(K_0)$, using that $\K$ is $\aleph_1$-saturated and $T$ model-complete.
Then we apply the scheme $(\DL)_E$ and Lemma \ref{wellprep} to find a differential solution in $\K$ as close as we wish to $\bar \delta(\ba)$ (and so satisfying $\xi(\x)$).

\par So, from now on, assume furthermore there is at least a non-trivial $\Ecl$-relation over $K$ that occurs within the tuple $\bar\delta^{m-1}(\ba)$.
 Let $\ba_{[\ell]}^0=(a_1,\ldots, a_{\ell})$ be the longest sub-tuple of $\ba=(a_1,\ldots,a_n)$ which is $\Ecl$-independent over $K$ (which we may assume by re-indexing to be an initial subtuple since $\Ecl$ has the exchange property and by the remark made at the beginning such tuple is not empty). 
Then we consider the $\Ecl$-relations among $\ba^1$ over $K$ and $\ba_{[\ell]}^0$. Note that we certainly have $\Ecl$-relations among $\ba^1_{[n-\ell]}$ and $\ba^0$. Again we possibly re-index the subtuple $\ba^1_{[\ell]}$ such that these $\Ecl$-relations occur among the co-initial part of $\ba^1_{[\ell]}$. 
We rename the corresponding subtuple $\widetilde{\ba^1}$ and possibly permute the indices of $\widetilde{\ba^0}$ to match indices. 
We proceed in this way getting successively $\widetilde{\ba^2},\ldots, \widetilde{\ba^{m-1}}$.

Namely, suppose we got $\widetilde{\ba^i}$, $0\leq i<m-1$. We consider the $\Ecl$-relations among $\ba^{i+1}$ over $K$ and $\widetilde{\ba^0},\ldots \widetilde{\ba^i}$. Again we re-index  in order that the $\Ecl$-relations only occur in the co-initial part of $\ba^{i+1}$ and we rename the corresponding subtuple $\widetilde{\ba^{i+1}}$ as well as possibly permuting the indices of $\widetilde{\ba^0},\ldots, \widetilde{\ba^i}$ to match indices. Assume the length of  $\widetilde{a^i}$ is equal to $\ell_i$, $0\leq i\leq m-1$ and by the way it was constructed $n\geq \ell=\ell_0\geq \ell_{1}\geq\ldots\geq \ell_{m-1}\geq 0$.

\par For sake of simplicity let us assume that $m=1$. Let $H_1(a_{\ell+1},\bar u_{\ell+1}),\ldots,H_{n-\ell}(a_{n},\bar u_{n})$ be $n-\ell$ Khovanskii systems over $K(\ba_{[\ell]})$, setting $\ba_{[\ell]}=\ba_{[\ell]}^0$, witnessing that $a_{\ell+1},\ldots,a_{n}$ belong to $\Ecl^{L}(K(\ba_{[\ell]}))$. 
\par It implies that we can express $\delta(a_{\ell+i}),\delta(\bar u_{\ell+i})$ in terms of $a, \bar u_{\ell+i}, \delta(a_{1}),\ldots,\delta(a_{\ell})$, $1\leq i\leq n-\ell$ and finitely many elements of $K$ and their derivative occurring as coefficients of the $E$-polynomials appearing in the Khovanskii systems (see Notation \ref{rat}). Let $\bar u:=(\bar u_{\ell+1},\ldots,\bar u_n)$.
\par Let $V_H^{prep}$ be the corresponding prepared system (see Definition \ref{const}).
Since $T$ is model-complete, there exists $\bar \gamma\in O(K)$ and $\bar z\in K$ such that $\varphi_H^*(\bar \gamma,\bar z)$ holds. 
Further we may assume that $\bar \gamma\notin \Ecl^K(K_0)$ since $K$ is $\aleph_1$-saturated.
\par Then we apply Lemma \ref{wellprep} (and the scheme $(\DL)_{E}$) to get a differential solution $\bar \delta^m(\alpha)\in K$ in $V\cap O$.
 So $\K_{\delta}\models \xi(\alpha)$.\qed

\subsection{Geometric version of the scheme $(\DL)_E$.}
In this section we translate in geometric terms the scheme $(\DL)_E$. It is similar in spirit to the differential lifting scheme introduced by Pierce and Pillay, which gave another axiomatization of the class of differentially closed fields of characteristic $0$ \cite{PP}.

\

For $n\leq m\in \IN^*$, let $\pi_n^{m}:K^{m}\to K^n$ be the projection onto the first $n$ coordinates and let $\pi_{(n,n)}^{2m}:K^{m}\times K^m\to K^n\times K^n:(x,y)\mapsto (\pi_n^m(x),\pi_n^m(y))$.

\begin{definition}\label{schgeo}

Let $\K_{\delta}\models T_\delta$, 
then $\K_{\delta}$ satisfies the scheme $(\DLg)_E$ if the following holds. Let $\tilde \K$ be a $\vert K\vert^+$-saturated $\L$-elementary extension of $\K$.
Let $W:=W(\bar f)\subset \tilde \K^{2n}$ be an $E$-variety defined over $K$ and let $\bar \chi(K,\bd)$ be a neighbourhood of $0$ in $K^{2n}$ with $\bd$ in $K$. Suppose that $0\leq \dim^{\tilde \K}(\pi_n^{2n}(W)/K)=\ell<n$.  Let $\ba$ be a generic point of $\pi_n^{2n}(W)$ with $\ba_{[\ell]}$ a subtuple of $\ba$ of $\Ecl$-independent elements over $K$ and let $(\ba,\bb)$ be a generic point of $W$. Let 
$\bu_{\ell+i}$ be tuples of elements in $\tilde K$, $1\leq i\leq n-\ell$, witnessing that each component of $\ba_{[n-\ell]}$ belongs to $\Ecl^{\tilde \K}(K,\ba_{[\ell]})$.
Set $\bar u:=(\bu_{\ell+1},\ldots,\bu_n)\in \tilde K^m$ and assume that $(\ba,\bb)\in \pi_{(n,n)}^{2(n+m)}(\tau(Ann^{K[\X]^E}(\ba, \bar u))$, $\vert \X\vert=m+n$, then we can find a differential point $(\alpha,\delta(\alpha))\in W\cap K^{2n}$ with $\bar \chi((\alpha,\delta(\alpha))-(\ba,\bb),\bd)$.
\end{definition}

\par The scheme $(\DLg)_E$ as stated is not first-order. The first issue concerns expressing that a tuple is generic and the second is that a priori we have to consider all the $E$-polynomials in an annihilator. Concerning the second one, keeping the same notations as in Definition \ref{schgeo}, one only needs the $E$-polynomials in $Ann^{K[\X]^E}(\ba,\bar u)$ occurring in the Khovanskii systems used to express that each component of $\ba_{[n-\ell]}$ belongs to $\Ecl(K,\ba_{[\ell]})$.
\medskip
\par Now let us indicate why the models of $T_{\delta}$ that satisfy $(\DLg)_E$ are existentially closed (in the class of models of $T_{\delta}$) and conversely.
\par First, suppose $\K_{\delta}$ satisfies the scheme $(\DLg)_E$ and let us show that $\K_{\delta}$ satisfies $(\DL)_{E}$. So as in Definition \ref{DL}, let $H_{\bar \ell}(\x^{0},\ldots,\x^{m-1},\bar z)$ be a Khovanskii formula and let \\$\bar h(\x^0,\x^1_{[\ell_0]},\ldots, \x^{m-1}_{[\ell_{m-2}]},\bar z, \x^{m}_{[\ell_{m-1}]})$ be a $p$-tuple of $E$-polynomials with coefficients in $K$ and let $\bar i:=(i_{1},\ldots,i_{p})$, $1\leq i_{1}<\ldots<i_{p}\leq \ell_{m-1}$, be indices of variables among $\x^m_{[\ell_{m-1}]}$. Consider a neighbourhood $\bar\chi(K,\bar d)$ of $0$ and suppose
$H_{\bar \ell}(\ba^{0},\ldots,\ba^{m-1},\bar u)$ holds and that \\$(\ba^0,\ba^1_{[\ell_0]},\ldots, \ba^{m-1}_{[\ell_{m-2}]},\bar u, \ba^{m}_{[\ell_{m-1}]})\in V_{\bar i}^{reg}(\bar h)$. 
\par\noindent Set $\bar a:=(\ba^0,\ba^1,\ldots, \ba^{m-1},\bar u)$ and $\bar b:=(\ba^1,\ldots, \ba^{m},\bar w)$, where $\bar w=\bar t(\bar a)$ for $\bar t$ a tuple of $\La$-terms obtained as in Notation \ref{rat}, from Khovanskii systems occurring in $H_{\bar \ell}(\ba^{0},\ldots,\ba^{m-1},\bar u)$  witnessing the $\Ecl$-relations among the tuple $(\ba^{0},\ldots,\ba^{m-1})$. Let $n$ be the length of the tuple $(\bar a,\bar u).$

\par\noindent Let $W$ be the $E$-variety defined over $K$ by the equations: $(\x^0,\x^1_{[\ell_0]},\ldots, \x^{m-1}_{[\ell_{m-2}]},\bar z, \x^{m}_{[\ell_{m-1}]})\in V_{\bar i}^{reg}(\bar h) \wedge H_{\bar \ell}(\x^{0},\ldots,\x^{m-1},\bar z) \wedge \bar y=\bar t(\x^{0},\ldots,\x^{m-1},\bar z)$ in 
 $((\x^{0},\ldots,\x^{m-1},\bar z);(\x^{1},\ldots,\x^{m},\bar y)$.

Then proceed as in Lemma \ref{iter1} and find a generic point $\tilde \ba$ of $\pi^{2n}_{n}(W)$ in an elementary extension $\tilde \K$ of $\K$ close to $\bar a$ w.r.to $\bar\chi(\tilde K,\bar d)$  and a tuple $\tilde \bb$ (close to $\bar b$ w.r.to $\bar\chi(\tilde K,\bar d)$) such that there is a derivation in $\tilde \K$ sending $\tilde \ba$ to $\tilde \bb$. So we have that $(\tilde \ba,\tilde \bb)\in \tau(Ann^{K[\X]^E}(\tilde \ba))$. 
Then we use $(\DLg)_{E}$ to find a differential point $(\alpha,\bar \delta(\alpha))\in W(K)$ and close to $(\tilde \ba,\tilde \bb)$. 
\par Second, suppose $\K_{\delta}$ satisfies the scheme $(\DL)_E$ and consider an $E$-variety $W\subset \tilde \K^{2n}$ defined over $K$ and $\ba$ a generic point of $\pi^{2n}_{n}(W)$. Let $\bar u$ witnessing the $\Ecl$-dependence relations among the tuple $\ba$ so that a Khovanskii formula $H_{\ell}(\ba,\bar u)$ holds. By reordering the tuple $\ba$, we may assume that $a_1,\ldots,a_\ell$ are $\Ecl$-independent, $1\leq \ell\leq n-1$.
Construct the corresponding variety $W_{H_{\ell}}^{prep}$ as in Definition \ref{const}. Since $\ba$ is a generic point of $\pi^{2n}_{n}(W)$ and $\dim(\pi^{2n}_{n}(W))>0$, $\ba \not\subset \Ecl(K)$. Note that the assumption $(\ba,\bb)\in \pi_{(n,n)}^{2(n+m)}(\tau(Ann^{K[\X]^E}(\ba, \bar u))$ implies that finding a point in $W_{H_{\ell}}^{prep}$ is equivalent to find a point in $W$. 
Then using Lemma \ref{wellprep}, where we show that we can reduce ourselves to consider a well-prepared variety and how to use $(\DL)_{E}$ in order to find a differential point close to $(\ba,\bb)$.

\section{Model-complete theories of (partial) exponential fields}\label{exa}
In this section, we apply our previous results to theories of topological fields $\K$ where the topology is defined by either an ordering $<$ or by a (non-trivial) valuation map $v$. 
In the case of valued field $\K:=(K,v)$ we will replace the valuation map by a binary relation $\divi$ defined as follows:
$$v(a)\leq v(b)\;{\rm iff}\;a\;  \divi\; b.$$
Denote by $\O_{K}$ be the valuation ring of $K$ and $\M_K$ the maximal ideal of $\O_K$.  Let $D$ be a binary function symbol for division in the valuation ring $\O_{K}$, defined as follows:
$D(x,y):=
\left\{\begin{array}{ll}
\frac{x}{y}& \text{if}\; v(x)\geq v(y)\; \text{and}\; y\neq 0,\\ 
0&\;\text {otherwise,}
\end{array}\right.$\\
\subsection{The real numbers}
A. Wilkie showed that the theory of $(\bar \IR,exp)$ where $\bar \IR$ is the ordered field of real numbers is model-complete \cite[Second MainTheorem]{W}. So setting $T:=Th(\bar \IR,exp)$, Theorem \ref{ec} holds for models of $T$ since they also satisfy the implicit function theorem scheme $(\IFTA)$ and the lack of flat functions $(\LFF)$.
\subsection{The $p$-adic numbers}
Let $(\IQ_p,v)$ be the valued field of $p$-adic numbers. A. Macintyre showed that the theory of $\IQ_p$ admits quantifier elimination in the language of fields together with the binary relation symbol $\divi$ and 
for each $n\geq 2$, the predicates $P_n$ defined by $P_n(x)\;{\rm iff}\;\exists y\;y^n=x$.

Then J. Denef and L. van den Dries showed that the theory of the valuation ring $\IZ_p$ of $\IQ_p$ (or the theory of $\IQ_{p}$) enriched by all restricted power series with coefficients in $\IZ_{p}$ together with the predicates $P_n$, $n\geq 2$ and the binary function $D:\IZ_{p}^2\to \IZ_{p}$ 
for division in $\IZ_{p}$, admits quantifier elimination \cite[Theorem (1.1)]{DD}.
N. Mariaule showed that the theory of the valuation ring $\IZ_p$ of $\IQ_p$ expanded by the exponential function $E_p(x)$ (see Examples \ref{example} (5)) together with for each $n\geq 2$ the so-called decomposition functions for $E_p(x)$ is model-complete \cite[Theorem 4.4.5]{M}. We will recall below precisely what are these decomposition functions \cite[Chapter 4]{M}.

From that one can easily deduce that the theory of the partial exponential valued field $(\IQ_{p},E_p)$ is model-complete in the language of fields together with the predicates $P_n$, $n\geq 2$, the binary function $\divi$, the exponential function $E_p(x)$ and the decomposition functions. (Note that N. Mariaule proves {\it strong model-completeness} \cite[section 2 (2.2)]{D88}). So again Theorem \ref{ec} holds for $T=Th(\IQ_{p},E_p)$.
\par Now let us recall what are these decomposition functions. They are the analog of the functions {\it sin} and {\it cos} in the real case, but their definition is more complicated since $\IQ_{p}$ has infinitely many proper algebraic extensions. 

The field $\IQ_{p}$ is bounded, namely for each fixed $d\geq 2$ it has only finitely many algebraic extensions of degree $d$. So one may define a chain of finite algebraic extensions $K_{n}$ of $\IQ_{p}$ with the following properties:
\begin{enumerate}
\item $K_{n}$ contains any extension of degree $n$ of $\IQ_{p}$, 
\item $K_{n}$ is the splitting field of an irreducible polynomial $q_n\in \IQ[X]$ of degree $N_{n}$.  
\end{enumerate}

One may further assume that $q_n\in \IZ_{p}[X]$. Let $\beta_{n}$ be a root of $q_n$ and let $K_{n}=\IQ_{p}(\beta_{n})$, $\O_{K_{n}}=\IZ_{p}[\beta_{n}]$. Then $\O_{K_{n}}$ is a $\IZ_{p}$-module with basis $1,\beta_{n},\ldots,\beta_{n}^{N_{n}-1}$. Let $y\in \O_{K_{n}}$ and write it as $\sum_{i=0}^{N_{n}-1} x_{i}\beta_{n}^i$. Then $E_p(y)=\prod_{i=0}^{N_{n}-1}E_p(x_{i} \beta_{n}^i)$, with $x_{i}\in \IZ_{p}$ and 
 one adds the decomposition functions for each $E_p(x\beta_{n}^i)$, namely functions from $\IZ_{p}$ to $\IZ_{p}$ which allows to express $E_p(x\beta_{n}^i)$ in $\O_{K_n}$. Namely, write $E_p(x\beta_{n}^i)=\sum_{j=0}^{N_{n}-1} \tilde c_{i,j,n}(x) \beta_{n}^ i$.
Conversely, one has: $(\tilde c_{i,j,n}(x))_{i<N_n}=V^{-1}(E_p((\beta_n^j)^{\sigma} x))_{\sigma\in Gal(K_n/\IQ_p)}$, where $V$ is the Vandermonde matrix associated to the roots of $q_n$.
Finally since $det(V)$ might be of strictly positive valuation, one has to multiply the $\tilde c_{i,j,n}(x)$ by the norm $N_{K_n/\IQ_p}(det(V))$ in order to obtain the decomposition functions $c_{i,j,n}(x)$ \cite[page 66]{M}. 
 Let $\L_{pEC}$ be the language $\L_E$ together with  the predicates $P_n$, $n>1$, and the decomposition functions $c_{i,j,n}$, $0\leq j\leq N_n$, $i, n\in \IN^*$. Then the $\L_{pEC}$-theory $T$ of $(\IQ_p,E_p)$ is model-complete \cite[Theorem 4.4.5]{M}. 
Since $\IQ_p$ satisfies the analytic version of the implicit function theorem, we may apply Theorem \ref{ec}. Note that we made a slight formal extension of our former result since we not only use the exponential function $E_p$ but also the decomposition functions, but in view of the relationships described above between the decomposition functions and the exponential function $E_p$, there is no problem in doing so. The key point being able to transform an $\L_{pEC,\delta}$-term $t(x_1,\ldots,x_n)$ into an $\L_{pEC}$-term $t^*$ in $\bar \delta^{m_1}(x_1),\ldots,\bar \delta^{m_n}(x_n)$. 

\subsection{The completion of the algebraic closure of the $p$-adic numbers}
Let $\IC_p$ be the completion of the algebraic closure of the field $\IQ_p$ of p-adic numbers. As a valued field, $\IC_p$ is a model of the theory $\ACVF_{0,p}$ of algebraically closed valued fields of characteristic $0$ and residue characteristic $p$. It admits quantifier elimination in the language $\{+,-,\cdot,0, 1, \divi\}$ \cite{R}. (Note that A. Robinson only proved model-completeness of the theory but the quantifier elimination result is easily deduced.)  N. Mariaule showed that the theory of the valuation ring $\O_p$ of $\IC_p$ endowed with the exponential function $E_p(x)$ is model-complete \cite[Theorem 6.2.11]{M}. From that one can easily deduce that the theory $T$ of the partial exponential valued field $(\IC_{p},\divi, E_p)$ is model-complete. Since $\IC_p$ also satisfies the analytic version of the implicit function theorem, we may apply Theorem \ref{ec}. (Note that in this case since $\IC_p$ is algebraically closed, one does not need to add additional functions such as the decomposition functions).

\subsection{Non-standard extensions of $\IQ_{p}$}
Let $(K,v)$ be a valued field extending $(\IQ_p,v)$. Let $\O_{K}$ be the valuation ring of $K$ and let $\O_K\langle\xi\rangle$ be the ring of strictly convergent power series over $\O_K$ in $\xi:=(\xi_{1},\ldots,\xi_{m})$. An element $f(\xi)$ is given by $\sum_{\nu\in \IN^n} a_{\nu} \xi^{\nu}$, where $\xi^{\nu}=\xi_{1}^{\nu_{1}}\ldots\xi_{n}^{\nu_{n}}$ and  $v(a_{\nu})\mapsto +\infty$, when $\vert \nu\vert=\nu_{1}+\ldots+\nu_{n}\mapsto +\infty$. Such $f$ defines a function from ${\O_K}^n$ to $\O_K$ defined by
$f(u)=\left\{\begin{array}{ll}
\sum_{\nu\in \IN^n} a_{\nu} u^{\nu} &\mbox{for}\;\; u\in \O_{K}^n,\\
0&\mbox{otherwise}\\
\end{array}\right.$
\par The language $\L_{an}$ is the language of rings augmented by a $n$-ary function symbol for each $f\in \O_{K}\langle\xi\rangle$ and $n\geq 1$. Let $D$ be a binary function symbol for division restricted to the valuation ring as defined above.
Let $\L_{an,\divi}:=\L_{an}\cup\{{\divi}\}\cup \{P_{n}: n\geq 2\}$. and $\L_{an,\divi}^D:=\L_{an,\divi}\cup\{D\}$. Let $\K$ denote the $\L_{an,\divi}$-structure with domain $K$ and the above interpretation of the symbols of the language.
In view of the way the functions $f$ are interpreted in both $\IQ_{p}$ and $\K$, we have that $\IQ_{p}$ is an $\L_{an,\divi}$-substructure of $K$. Then using the quantifier elimination theorem of J. Denef and L. van den Dries, that if $\K$ is a model of $Th_{\L_{an,\divi}}(\IQ_{p})$, then $\K$ is an elementary $\L_{an,\divi}$-extension of $\IQ_{p}$. Now if we restrict the language $\L_{an,\divi}$ to the language $\L_{pEC}$, we get that the theory $T$ of $\K$ in this restricted language is also model-complete (and in fact equal to the theory of $(\IQ_p,E_p)$. 
In order to apply Theorem \ref{ec} to $\K_\delta$, we need to check that $\K$ satisfies $\IFTA_E^{an}$. A way to do this is to get a universal axiomatisation of $Th_{\L_{an,\divi}}(\IQ_{p})$. It will imply that any definable function from $\O_K^n$ to $\O_K$ is piecewise given by $\L_{an,\divi}$-terms and so analytic functions. (This argument was used for $\IR_{an}$ in \cite{DMM2}.)
\par We express that $K^*/(K^*)^n\cong \IQ_p^*/(\IQ_p^*)^n$ and that cosets representative of the subgroup of $n^{th}$ powers can be found in $\IN$, namely for every $x\in K^*$ there exist $\lambda, r\in \IN$ with $0\leq r<n$, $0\leq \lambda<p^{\beta(n)}$ and $\beta(n)=2v(n)+1$
and $P_n(x \lambda p^r)$ \cite[Lemma 4.2]{Be}. This can be expressed by a finite disjunction and translates the fact that $v(K^*)$ is a $\IZ$-group 
\par Then we express that $K$ is henselian in the following way. Let $p(X)\in \O_K[X]$ be an ordinary polynomial of degree $n$. Then one defines a function $h_{n}: \O_K^{n+1}\to \O_K$ sending 
$(a_0,\ldots,a_n,b)\mapsto u$ with $a_n b^n+\ldots+a_1 b+a_0=0$, $v(p(b))>0$, $v(\partial_X p(b))=0$ and $v(u-b)>0$ and to $0$ otherwise \cite[Definition 3.2.10]{CL}.

So this gives us a non-standard model of $T$ to which we may apply Theorem \ref{ec}.

\section{Construction of models of the scheme $(\DL)_E$}\label{construction}

\par In this section we will place ourselves in the same setting as in section \ref{sec:ec}.
 We show how to endow certain exponential topological fields $K$ endowed with a $V$-topology, satisfying $(\IFTA)_{e}$ and $(\LFF)$ 
 with a derivation in such a way they become a model of the scheme $(\DL)_E$.
One can follow a similar strategy as in \cite{Br}, \cite{Re} to endow certain (ordered) fields with a derivation in such a way they become a model of the scheme $(\DL)$ introduced in \cite{GP},  generalizing for certain differential topological fields the axiomatization $\CODF$ of closed ordered differential fields given by M. Singer in \cite{Si}.
\par In the proposition below, we will assume that the field $K$, as a topological space, is separable and first-countable and so its cardinality is at most $2^{\aleph_0}$. 
\begin{proposition} \label{external} 
Let $\L$ be a countable language and $\K$ be a topological $\L$-field of cardinality $\aleph_{1}$, endowed with a $V$-topology, which is definable with corresponding formula $\chi$. Assume that $K$ as topological space, is first-countable and separable. 
Suppose $\K$ satisfies $(\IFTA)_{e}$ and $(\LFF)$.
Then we can endow $K$ with a derivation $\delta$ such that $\K_{\delta}$ is a model of the scheme $(\DL)_E$.
\end{proposition}
\pr 
Let $\{\chi(K,\bar d_i):\;\bar d_i\in K, i\in \omega\}$ be a countable basis of neighbourhoods of $0$ and further assume, setting $W_i:=\chi(K,\bar d_i)$ that $W_{i+1}+W_{i+1}\subset W_{i}$. Let $D$ be a countable dense subset of $K$.
 Let $\K_0$ be the (countable, dense) $\L$-substructure of $\K$ generated by $(\bar d_i)_{i\in \omega}$ and $D$. 
 Moreover, we may assume, by Lowenheim-Skolem theorem, that $\K_{0}$ is an elementary substructure of $\K$.
Express $K$ as $K_{0}(B)$ with $B$ a subset of elements of $K$ which are $\Ecl$-independent over $K_{0}$ (so $\vert B\vert=\aleph_{1}$). Set $B:=(t_\alpha)_{\alpha<\aleph_1}$.
\cl \label{small}  
For each $W_i$, $i\in\omega$, and each $\ell\in\omega$, there are elements $s_1,\ldots, s_{\ell}\in W_i$ that are $\Ecl$-independent over $K_0$ and with the property that $s_j-t_j\in K_0$, $1\leq j\leq\ell$.
\ecl
\prc

Fix $W_i$ a neighbourhood of $0$ in $K$ and choose $t_0,\ldots,t_\ell\in B$, $\ell\in \omega$.
Since $K_0$ is dense in $K$, there are for each $0\leq j\leq\ell$, $r_{ji}\in K_0$ such that $t_j-r_{ji}\in W_{i}$. Set $s_{j}:=t_j-r_{ji}$, $0\leq j\leq \ell$. 
 The elements $s_{1},\ldots,s_{\ell}\in K$, are  $\Ecl$-independent over $K_{0}$ and belong to $W_i$.
\qed

\

We will express $\K$ as the union of an elementary chain of countable subfields $\K_{0}\preceq \K_{\alpha}$ endowed with a derivation $\delta_{\alpha}$, $\alpha<\aleph_{1}$, starting by putting on $K_{0}$ the trivial derivation $\delta_0$.
 \par By induction on $\alpha$, assume we have constructed $\K_{0}\subset \K_{\alpha}\preceq \K$ a countable $\L$-elementary substructure of $\K$ and suppose $\K_{\alpha}$ is endowed with a derivation $\delta_{\alpha}$. We want to find an $\L$-elementary extension $\K_{\alpha+1}$ of $\K_\alpha$ endowed with a derivation $\delta_{\alpha+1}$
with the following property. Given a neighbourhood of zero $W_j$ and 
a (finite) system $E$-polynomial equations $(\x^{0},\ldots,\x^{m})\in V$ with coefficients in $K_{\alpha}$, a Khovanskii formula $H_{\bar \ell}(\x^{0},\ldots,\x^{m-1},\bar z)$ and associated prepared system $(\x^0,\x^1,\ldots, \x^{m},\bar z)\in V_H^{prep}$, and tuple of $E$-polynomials $\bar h$ over $K_{\alpha}$ witnessing that
$V_H^{prep}$ is well-prepared over a finitely generated subfield of $K_{\alpha}$ (see Definition \ref{const}), if there are
$(\ba^{0},\ldots,\ba^{m},\bar b)\in V_{\bar i}^{reg}(h_1,\ldots,h_p)$ for some $\bar a:=(\ba^0,\ldots,\ba^m)$ 
and $\bar b \in K_{\alpha}$, then we can find $\beta, \bar u\in K_{\alpha+1}$ such that
$(\bar\delta_{\alpha+1}^m(\beta),\bar u)\in V_{\bar i}^{reg}(h_1,\ldots,h_p)\,\wedge\;(\bar\delta_{\alpha+1}^m(\beta),\bar u)-(\ba^0,\ldots,\ba^m,\bar b)\in W_j$.

\medskip
Let $\bar x:=(\x^0,\ldots, \x^m)$, $\x:=\x^0$, $\vert \x\vert=n$ and, keeping the notations of Definition \ref{const}, set
\begin{align*}
\Fs_{\alpha}:=&\{\big((\bar x\in V),\; H(\x^{0},\ldots,\x^{m-1},\bar z),\; (\bar x,\bar z)\in V_H^{prep},\;(h_1,\ldots,h_p)\big)\colon \\
                      &\K \models \exists \bar x\exists \bar z\;(\bar x,\bar z)\in V_{\bar i}^{reg}(h_1,\ldots,h_p) \;{\rm with}\;\vert \bar i\vert=p,\;(\bar x\in V)\; {\rm varying\;over\;all}\\       
                     &\L(K_\alpha){\rm -equations},\;H\;{\rm over\;the\;Khovanskii\;formulas \;over}\; K_{\alpha},\; (h_1,\ldots,h_p)\;{\rm a\; tuple \;in}\\
                     & K_{\alpha}[\X^m]^E\;{\rm witnessing\; that}\;V_H^{prep}\;{\rm is\; well-prepared}\}.
\end{align*}

 We will construct a differential extension $\K_{\alpha+1}$ of $\K_{\alpha}$ containing $t_\alpha$, satisfying the scheme $(\DL)_E$ relative to $\Fs_\alpha$, using Lemma \ref{iter1}. Note that Lemma \ref{iter1} had an hypothesis of saturation but it was only to ensure the existence of $\Ecl$-independent elements (over $K_{0}$). The property that these $\Ecl$-independent elements were $K_{0}$-small is replaced by finding elements $s_{1},\ldots,s_{\ell}\in W_{j}$, $\Ecl$-independent and congruent to $t_{1},\dots,t_{\ell}$ modulo $K_{0}$.
 
 \

\par Let $\big((\bar x\in V),\; H(\x^{0},\ldots,\x^{m-1},\bar z),\; (\bar x,\bar z)\in V_H^{prep},\;(h_1,\ldots,h_p)\big)\in \Fs_{\alpha},$ \\where
$(\bar x\in V)$ be a conjunction of $\L_-(K_{\alpha})$-equations, $H_{\bar \ell}(\x^{0},\ldots,\x^{m-1},\bar z)$ a Khovanskii formula, and $(h_1,\ldots,h_p)\in K_{\alpha}[\X^m]^E$ witnessing that $V_H^{prep}$ can be well-prepared.

\par Let $\ba:=(a_{1},\ldots, a_{n})$, $\bar a:=(\ba^0,\ldots,\ba^m),\;\bar u\in K_{\alpha}$ be such that $(\bar a,\bar u)\in V_{\bar i}^{reg}(h_1,\ldots,h_p)$ with $\vert \bar i\vert=p$.
Let $W_{j}$ be a fixed neighbourhood of zero. Then by Claim \ref{small} and Lemma \ref{iter1}, there is an elementary extension of $\K_\alpha$ inside $\K$, a derivation $\tilde \delta_{\alpha}$ extending $\delta_{\alpha}$ on $K_\alpha$ and a finite tuple $\bar \gamma$ of elements in $K$ such that for some $\overline{ \tilde \delta}_{\alpha}^{m}(\beta)\in \Ecl^K(K_{\alpha},\bar \gamma)$ we have $(\overline{\tilde \delta_{\alpha}}^m(\beta),\overline{\tilde u})\in V_{\bar i}^{reg}(h_1,\ldots,h_p)$
and $\overline{ \tilde \delta}_{\alpha}^{m}(\beta)-\bar a \in W_j$. 
Furthermore by Remark \ref{zero-iso}, $\Ecl^K(K_{\alpha},\bar \gamma)$ is countable.
In case $t_\alpha$ does not belong to $\Ecl^K(K_{\alpha},\bar \gamma)$, we define $\tilde \delta_\alpha(t_\alpha)=1$. Then let $K_{\alpha,1}=\Ecl(K_{\alpha}(t_\alpha, \bar \gamma)$.
\par We enumerate $\Fs_{\alpha}$ and the extension $\K_{\alpha,i}$ corresponds to where the $i^{th}$ tuple in $\Fs_{\alpha}$ has a differential solution close to the algebraic one in $W_j$. Set $\K_{\alpha}^{(1)}:=\bigcup_{i} \K_{\alpha,i}$. Then we redo the construction with $\K_{\alpha}^{(1)}$ in place of $\K_{\alpha}$ with a smaller neighbourhood of zero, say $W_{j+1}$.  Set $\K_{\alpha+1}:=\bigcup_{m} \K_{\alpha}^{(m)}$. Note that $\K_{\alpha+1}$ is countable.

\par So we described what happens at successor ordinals and at limit ordinals we simply take the union of the subfields we have constructed so far. 
Finally we express $\K$ as the union of a chain of differential subfields $\K_{\alpha}$ and given 
a (finite) system $E$-polynomial equations $(\x^{0},\ldots,\x^{m})\in V$ with coefficients in $K$, a Khovanskii formula $H_{\bar \ell}(\x^{0},\ldots,\x^{m-1},\bar z)$ and associated prepared system $(\x^0,\x^1,\ldots, \x^{m},\bar z)\in V_H^{prep}$, and tuple of $E$-polynomials $\bar h$ over $K$ witnessing that
$V_H^{prep}$ is well-prepared over a finitely generated subfield of $K$. 
We can find an index $\alpha$ such that the well-prepared system has all its coefficients in $K_{\alpha}$ and we may assume that we have a solution $(\bar a,\bar u)$ in $\K_{\alpha}$ belonging to $V_{\bar i}^{reg}(h_1,\ldots,h_p)$. Then given a neighbourhood $W_i$ of $0$, we can find $\beta, \overline{\tilde u}\in \K_{\alpha+1}$ such that 
 $(\bar \delta^m(\beta),\overline{\tilde u}\in V_{\bar i}^{reg}(h_1,\ldots,h_p)$ and each component of $(\bar \delta^m(\beta), \overline{\tilde u})$ is $W_i$-close to  $(\bar a,\bar u)$.\qed

\bigskip
\par Denote by $\L_0$ the reduct of $\L$ where we remove the exponential function and denote by $T_0$ the theory of the $\L_0$-reducts of the models of $T$. Let us assume that $T_0$ admits quantifier elimination. Then in \cite{GP}, we showed that the class of existentially closed models of $T_{0,\delta}$ was elementary, assuming that the models of $T$ satisfied Hypothesis (I). That last property is an analog for topological fields of the property of being large, as introduced by F. Pop \cite{Pop2}).
Let us first recall the following notation. 
Given a differential polynomial $p(X)\in K\{X\}$ of order $m>0$, with $\vert X\vert=1$, 
the separant $s_{p}$ of $p$ is defined as $s_{p}:=\frac{\partial}{\partial \delta^m(x)}p\in K\{X\}$.

\begin{definition}  \cite[Definition 3.5]{GP} The scheme of axioms $(\mathrm{DL})$ is the following: given a model $\K$ of $T_{0,\delta}$, $\K$ satisfies $(\mathrm{DL})$ if for every differential polynomial $p(X)\in K\{X\}$ with $\vert X\vert=1$ and $\ord_X(p)=m\geqslant 1$, for variables $\y=(y_0,\ldots,y_m)$ it holds in $\K$ that
\[
\forall z\big((\exists y(p^*(\y)=0 \land s_p^*(\y)\ne 0 )
\rightarrow  \exists x\big(p(x)=0\land s_p(x)\ne 0\wedge
\chi_\tau(\bar{\delta}^m(x)-\y, z)\big)\big).
\]
\end{definition}

By quantifying over coefficients, the axiom scheme $(\mathrm{DL})$ can be expressed in the language $\L_{-,\delta}$.

\cor Let $\K$ be a topological $\L$-field satisfying $(\IFTA)_{e}$ and $(\LFF)$ endowed with a $V$-topology which is definable with corresponding formula $\chi$. Assume that $K$ is of cardinality $\aleph_{1}$ and as a topological space, first-countable and separable.   
Then we can endow $\K$ with a derivation $\delta$ such that $\K_{\delta}$ is a model of the schemes $(\DL)_E\cup (\DL)$.
\ecor
\begin{proof}
We modify the proof of proposition above by also considering the instances of the scheme $(\DL)$ and alternating between solving a formula from scheme $(\DL)_E$ to solving a formula from scheme $(\DL)$. We observe that if $t_1,\ldots,t_n$ are $\Ecl$-independent, then they are also algebraically independent by Corollary \ref{norel}.
\end{proof}

\cor Let $\K$ be an ordered real-closed exponential field. Assume that $K$ is of cardinality $\aleph_{1}$ with a countable dense subfield.  
Then we can endow $\K$ with a derivation $\delta$ such that $\K_{\delta}$ is a model of $\CODF$ together with the scheme $(\DL)_E$.\qed
\ecor

\rem Now let us record a few cases when $K$ is separable and first-countable. 
First, suppose that $(K,v)$ is an henselian perfect valued field of equicharacteristic $0$, with value group $G$.
Denote by $\{t^g\in K\colon \;g\in G\;\&\;v(t^g)=g\}$, a family of elements of $K$ whose set of values is $G$.
Then by a result of Kaplansky, the residue field $k$ isomorphically embeds in the valuation ring of $K$ \cite[Lemma 3.8]{Fo}.  Assume that $k$ is countable and 
$\vert G\vert=\aleph_0$. 
Consider the subring of $K$ generated by $k$ and $\{t^g:\;g\in G_{>0}\}$, then it is dense in the valuation ring of $K$.
Since the inverse operation is continuous, $K$ has a dense countable subfield.
A countable basis of neighbourhoods of $0$ is given by the balls $W_g:=\{x\in K\colon v(x)>g\}$, where $g\in G_{>0}$.
\par Second, suppose now that $(K,\leq)$ is an ordered real-closed field. Either $K$ is archimedean and so it embeds into $\IR$. So assume that the archimedean valuation on $K$ is non trivial. The residue field with respect to this archimedean valuation embeds in $\IR$. In case the value group $G$ is countable, the subring of $K$ generated by $\IQ$ and $\{t^g:\;g\in G_{>0}\}$ is dense in the valuation ring of $K$.
 As a countable basis of neighbourhoods we may take 
the balls $B_{n,g}:=\{x\in K\colon \vert x \vert<\frac{1}{n}t^{g}\}$, with $n\in \IN^*$, $g\in G_{>0}$ and
where $t^{g}$ is a strictly positive element of $K$ with archimedean valuation equal to $g$.
\erem

\end{document}